\numberwithin{equation}{section}   
\title{\bf Direct Approach of Linear-Quadratic Stackelberg Mean Field Games of Backward-Forward Stochastic Systems \thanks{This work is supported by National Key R\&D Program of China (2022YFA1006104), National Natural Science Foundations of China (11971266, 12271304, 11831010), and Shandong Provincial Natural Science Foundations (ZR2022JQ01, ZR2020ZD24, ZR2019ZD42).}}
\author{\normalsize  Wenyu Cong\thanks{\it School of Mathematics, Shandong University, Jinan 250100, P.R. China, E-mail: congwenyu@mail.sdu.edu.cn} , Jingtao Shi\thanks{\it Corresponding author. School of Mathematics, Shandong University, Jinan 250100, P.R. China, E-mail: shijingtao@sdu.edu.cn}}
\newtheorem{mypro}{Proposition}[section]
\newtheorem{mythm}{Theorem}[section]
\newtheorem{mydef}{Definition}[section]
\begin{document}

    \maketitle

    \noindent{\bf Abstract:}\quad This paper is concerned with a linear-quadratic (LQ) Stackelberg mean field games of backward-forward stochastic systems, involving a backward leader and a substantial number of forward followers. The leader initiates by providing its strategy, and subsequently, each follower optimizes its individual cost. A direct approach is applied to solve this game. Initially, we address a mean field game problem, determining the optimal response of followers to the leader's strategy. Following the implementation of followers' strategies, the leader faces an optimal control problem driven by high-dimensional forward-backward stochastic differential equations (FBSDEs). Through the decoupling of the high-dimensional Hamiltonian system using mean field approximations, we formulate a set of decentralized strategies for all players, demonstrated to be an $(\epsilon_1, \epsilon_2)$-Stackelberg equilibrium.

    \vspace{2mm}

    \noindent{\bf Keywords:}\quad Stackelberg mean field game, direct approach, linear-quadratic stochastic optimal control, forward-backward stochastic differential equation, Stackelberg equilibrium

    \vspace{2mm}

    \noindent{\bf Mathematics Subject Classification:}\quad 93E20, 60H10, 49K45, 49N70, 91A23

    \section{Introduction}

    {\it Mean field games} (MFGs) have garnered increasing scholarly attention, finding applications in diverse fields such as system control, applied mathematics, and economics (\cite{Bensoussan-Frehse-Yam-13}, \cite{Gomes-Saude-14}, \cite{Caines-Huang-Malhame-17}, \cite{Carmona-Delarue-18}).
    MFG theory serves as a framework for describing the behavior of models characterized by large populations, where the influence of the overall population is significant, despite the negligible impact on individual entities.
    The methodological foundations of MFG, initially proposed by Lasry and Lions \cite{Lasry-Lions-07} and independently by Huang et al. \cite{Huang-Malhame-Caines-06}, have proven effective and tractable for analyzing weakly coupled stochastic controlled systems with mean field interactions, establishing approximate Nash equilibria.
    In particular, within the {\it linear-quadratic} (LQ) framework, MFGs offer a versatile modeling tool applicable to a myriad of practical problems. The solutions derived from LQ-MFGs exhibit noteworthy and elegant properties. Current scholarly discourse has extensively explored MFGs, particularly within the LQ framework (\cite{Li-Zhang-08}, \cite{Bensoussan-Sung-Yam-Yung-16}, \cite{Moon-Basar-17}, \cite{Huang-Zhou-20}).
    Huang et al. \cite{Huang-Caines-Malhame-07} conducted a study on $\epsilon$-Nash equilibrium strategies in the context of LQ-MFGs with discounted costs.
    This investigation was rooted in the {\it Nash certainty equivalence} (NCE) approach. Subsequently, the NCE approach was applied to scenarios involving long run average costs, in Li and Zhang \cite{Li-Zhang-08}.
    In the domain of MFGs featuring major players, Huang \cite{Huang-10} delved into continuous-time LQ games, providing insights into $\epsilon$-Nash equilibrium strategies.
    Huang et al. \cite{Huang-Wang-Wu-16} introduced a backward-major and forward-minor setup for an LQ-MFG, and decentralized $\epsilon$-Nash equilibrium strategies for major and minor agents were obtained.
    Huang et al. \cite{Huang-Wang-Wu-16b} examined the backward LQ-MFG of weakly coupled stochastic large population systems under both full and partial information scenarios.
    Huang and Li \cite{Huang-Li-18} delved into an LQ-MFG concerning a class of stochastic delayed systems.
    Xu and Zhang \cite{Xu-Zhang-20} explored a general LQ-MFG for stochastic large population systems, where the individual diffusion coefficient is contingent on the state and control of the agent.
    Bensoussan et al. \cite{Bensoussan-Feng-Huang-21} considered an LQ-MFG with partial observation and common noise.

    The Stackelberg differential game problem, also known as the leader-follower differential game problem, arises in markets where certain companies possess greater authority to dominate others or individual entities.
    In response to this characteristic, Stackelberg \cite{Stackelberg-1952} introduced the concept of hierarchical solution. Within the context of the Stackelberg differential game, there are two players with asymmetric roles, one designated as the leader and the other as the follower.
    To attain a pair of Stackelberg equilibrium solutions, the differential game problem is typically bifurcated into two segments.
    In the first segment, the problem of the follower is addressed. Initially, the leader publicly announces their strategy, transforming the two-player differential game into the single-player optimal control problem for the follower.
    In other words, the follower promptly reacts, selecting an optimal strategy in response to the strategy disclosed by the leader, aiming to minimize (or maximize) their own cost functional.
    The second segment involves the leader choosing an optimal strategy, given the assumption that the follower will adopt such an optimal strategy, to minimize their own cost functional.
    This constitutes another optimal control problem for the leader. In summary, decision-making must be jointly accomplished by both players.
    Due to the asymmetry in roles, one player must be subordinate to the other, necessitating that one player makes their decision after the other has concluded their decision-making process.
    The Stackelberg differential game problem holds significant relevance in financial and economic practices, prompting an increasing focus in applied research.
    Bagchi and Ba\c{s}ar \cite{Bagchi-Basar-81} explored an LQ Stackelberg stochastic differential game, where the diffusion coefficient in the state equation does not involve state and control variables.
    Yong \cite{Yong-02} delved into a more generalized framework of LQ leader-follower differential game problems.
    In this study, coefficients of the state system and cost functional are stochastic, the diffusion coefficient in the state equation includes control variables, and the weight matrix in front of the control variables in the cost functional is not necessarily positive definite.
    Bensoussan et al. \cite{Bensoussan-Chen-Sethi-15} introduced several solution concepts based on players' information sets and investigated LQ Stackelberg differential games under adaptive open-loop and closed-loop memoryless information structures, where control variables do not enter the diffusion coefficient in the state equation.
    Zheng and Shi \cite{Zheng-Shi-20} investigated a Stackelberg game involving {\it backward stochastic differential equations} (BSDEs) (Pardoux and Peng \cite{Pardoux-Peng-90}, Ma and Yong \cite{Ma-Yong-99}).
    Feng et al. \cite{Feng-Hu-Huang-22} examined a Stackelberg game associated with BSDE featuring constraints.
    Sun et al. \cite{Sun-Wang-Wen-23} conducted research on a zero-sum LQ Stackelberg stochastic differential game.

    {\it Stackelberg mean field games} (Stackelberg MFGs), distinct from Stackelberg stochastic differential games of mean field type (incorporating the expected values of state and control variables, as seen in references \cite{Du-Wu-19}, \cite{Lin-Jiang-Zhang-19}, \cite{Wang-Zhang-20}, \cite{Moon-Yang-21}, \cite{Lin-Zhang-23}, etc.), have been increasingly capturing the attention of researchers.
    Nourian et al. \cite{Nourian-Caines-Malhami-Huang-12} studied a large population LQ leader-follower stochastic multi-agent systems and established their $(\epsilon_1, \epsilon_2)$-Stackelberg-Nash equilibrium.
    Bensoussan et al. \cite{Bensoussan-Chau-Yam-15} and Bensoussan et al. \cite{Bensoussan-Chau-Lai-Yam-17} investigated Stackelberg MFGs featuring delayed responses.
    Wang and Zhang \cite{Wang-Zhang-14} examined hierarchical games for multi-agent systems involving a leader and a large number of followers with infinite horizon tracking-type costs.
    Moon and Ba\c{s}ar \cite{Moon-Basar-18} considered the LQ Stackelberg MFG with the adapted open-loop information structure, and derived $(\epsilon_1, \epsilon_2)$-Stackelberg-Nash equilibrium.
    Yang and Huang \cite{Yang-Huang-21} conducted a study on LQ Stackelberg MFGs involving a major player (leader) and $N$ minor players (followers).
    Si and Wu \cite{Si-Wu-21} explored a backward-forward LQ Stackelberg MFG, where the leader's state equation is backward, and the followers' state equation is forward.
    Wang \cite{Wang-24} employed a direct method to solve LQ Stackelberg MFGs with a leader and a substantial number of followers.
    A static output feedback strategy for robust incentive Stackelberg games with a large population for mean field stochastic systems was investigated in Mukaidani et al. \cite{Mukaidani-Irie-Xu-Zhuang-22}.
    Dayanikli and Lauri\`{e}re \cite{Dayanikli-Lauriere-23} proposed a numerical approach of machine learning techniques to solve Stackelberg problems between a principal and a mean field of agents.

    Conventionally, two approaches are employed in the resolution of mean field games.
    One is termed the fixed-point approach (or top-down approach, NCE approach, see \cite{Huang-Caines-Malhame-07}, \cite{Huang-Malhame-Caines-06}, \cite{Li-Zhang-08}, \cite{Bensoussan-Frehse-Yam-13}, \cite{Carmona-Delarue-18}), which initiates the process by employing mean field approximation and formulating a fixed-point equation.
    By tackling the fixed-point equation and scrutinizing the optimal response of a representative player, decentralized strategies can be formulated.
    The alternative approach is known as the direct approach (or bottom-up approach, refer to \cite{Lasry-Lions-07}, \cite{Wang-Zhang-Zhang-20}, \cite{Huang-Zhou-20}, \cite{Wang-24}).
    This method commences by formally solving an $N$-player game problem within a vast and finite population setting.
    Subsequently, by decoupling or reducing high-dimensional systems, centralized control can be explicitly derived, contingent on the state of a specific player and the average state of the population.
    As the population size $N$ approaches infinity, the construction of decentralized strategies becomes feasible. In \cite{Huang-Zhou-20}, the authors addressed the connection and difference of these two routes in an LQ setting.

    In this paper, we explore a backward-forward LQ Stackelberg MFG with a leader and multiple followers.
    The leader initiates the process by disclosing their strategy, following which each follower optimizes its individual cost.
    Employing the direct approach, we formulate $(\epsilon_1, \epsilon_2)$-Stackelberg equilibrium strategies.
    With the leader's strategy given, we first address a MFG by the {\it stochastic maximum principle} (SMP), resulting in a system of {\it forward-backward stochastic differential equations} (FBSDEs).
    Subsequent to the followers implementing their strategies, the leader is encounters an optimal control problem driven by a forward SDE and two BSDEs.
    By variational analysis, we obtain the centralized strategy for the leader.
    By decoupling a high-dimensional FBSDE with mean field approximations, we construct a set of decentralized $(\epsilon_1, \epsilon_2)$-Stackelberg equilibrium strategies, in terms of a 6-dimensional FBSDE.

    The main contributions of the paper are outlined as follows.

    $\bullet$ We formulate a new type of LQ Stackelberg MFGs, where the state of leader is a backward equation, and followers are forward equations.
    Note that in our model, both the state and control variables of the leader enter into the drift coefficient of the forward equation, which is different from those in \cite{Huang-Wang-Wu-16}, \cite{Si-Wu-21}.
    Our model is also different with that in \cite{Wang-24}, where the leader's state is a forward equation.
    Moreover, the cross terms of the leader's and followers' control variables enter into the followers' cost functionals.
    These new structures bring mathematical difficulties, and, to some degree, they have some potential applications in reality.

    $\bullet$ In lieu of the traditional fixed-point methodology, we embrace a direct approach to tackle the complexities of our game problem, which was used in \cite{Huang-Zhou-20}, \cite{Wang-24}.
    This strategic shift involves the meticulous decoupling of the high-dimensional Hamiltonian system using mean field approximations.
    By disentangling the high-dimensional Hamiltonian system through mean field approximations, we formulate a set of decentralized strategies for all players. This set is subsequently demonstrated to constitute an $(\epsilon_1, \epsilon_2)$-Stackelberg equilibrium.

    The paper is structured as follows.
    In Section 2, we formulate the problem of backward-forward LQ Stackelberg MFG.
    In Section 3, we explore and determine centralized strategies for $N$ followers and the leader.
    In Section 4, we construct decentralized strategies and the proof of the asymptotic optimality is rigorously presented.
    Section 5 engages in a numerical simulation to demonstrate the effectiveness of our obtained results.
    Finally, some conclusions and future research directions are given in Section 6.

    The following notations will be used throughout this paper.
    We use $||\cdot||$ to denote the norm of a Euclidean space, or the Frobenius norm for matrices. $\top$ denotes the transpose of a vector or matrix. For a symmetric matrix $Q$ and a vector $z$, $||z||^2_Q \equiv z^\top Qz$.
    For any real-valued scalar functions $f(\cdot)$ and $g(\cdot)$ defined on $\mathbb{R}$, $f(x) = O(g(x))$ means that there exists a constant $C > 0$ such that $\lim_{x \to \infty} |\frac{f(x)}{g(x)}| = C$, where $|\cdot|$ is an absolute value, which is also equivalent to saying that there exist $C > 0$ and $x$ such that $|f(x)| \le C|g(x)|$ for any $x \ge x'$.

    Let $T>0$ be a finite time duration and $(\Omega, \mathcal{F}, \{\mathcal{H}_t\}_{0 \le t \le T}, \mathbb{P})$ be a complete filtered probability space with the filtration $\{\mathcal{H}_t\}_{0 \le t \le T}$ augmented by all the $\mathbb{P}$-null sets in $\mathcal{F}$. $\mathbb{E}$ denoted the expectation with respect to $\mathbb{P}$. Let $L_{\mathcal{H}}^2(0,T;\cdot)$ be the set of all vector-valued (or matrix-valued) $\mathcal{H}_t$-adapted processes $f(\cdot)$ such that $\mathbb{E}\big[\int^T_0 ||f(t)||^2dt\big] < \infty$ and $L^2_{\mathcal{H}_t}(\Omega;\cdot)$ be the set of $\mathcal{H}_t$-measurable random variables, for $t\in[0,T]$.

    \section{Problem formulation}

    We consider a large-population system with one leader and $N$ followers, where $N$ can be arbitrarily large. The states equation of the leader and the $i$th follower, $1 \le i \le N$, are given by the following controlled linear BSDE and SDE, respectively:
    \begin{equation}\label{state}
        \left\{
        \begin{aligned}
            dx_0(t) &= \big[A_0x_0(t) + B_0u_0(t) + C_0z_0(t) + f_0\big]dt + z_0(t)dW_0(t),\\
            dx_i(t) &= \big[Ax_i(t) + Bu_i(t) + Fx_0(t) + Gu_0(t) + f\big]dt + DdW_i(t),\quad t\in[0,T],\\
             x_0(T) &= \xi_0 ,\quad x_i(0) = \xi_i , \quad i = 1,\cdots,N,
        \end{aligned}
        \right.
    \end{equation}
    where $x_0(\cdot)$, $u_0(\cdot)$ are the state process and the control process, $\xi_0 \in L^2_{\mathcal{F}_T^0}(\Omega; \mathbb{R})$ is the given terminal value of the leader; similarly, $x_i(\cdot)$, $u_i(\cdot)$ and $\xi_i$ are the state process, control process and initial value (random variable) of the $i$th follower.
    In this paper, for simplicity, we assume the dimensions of state process and control process are both one-dimensional.
    Here, $A_0, B_0, C_0, f_0, A, B, F, G, f, D$ are scalar constants.
    $W_i(\cdot), i = 0,\cdots,N$ are a sequence of one-dimensional Brownian motions defined on $(\Omega, \mathcal{F}, \{\mathcal{F}_t\}_{0 \le t \le T}, \mathbb{P})$.
    Let $\mathcal{F}_t$ be the $\sigma$-algebra generated by $\{\xi_i, W_0(s), W_i(s), s \le t, 1 \le i \le N\}$.
    Denote $\mathcal{F}_t^0$ be the $\sigma$-algebra generated by $\{W_0(s), s \le t\}$ and $\mathcal{F}_t^i$ be the $\sigma$-algebra generated by $\{\xi_i, W_0(s), W_i(s), s \le t\}, 1 \le i \le N$.
    Define the decentralized control set for the leader as
    $$\mathscr{U}_0[0,T] := \big\{u_0(\cdot)|u_0(\cdot) \in L^2_{\mathcal{F}^0}(0,T;\mathbb{R})\big\}.$$
    Define the decentralized control set for the followers and $i$th follower as
    $$\mathscr{U}_d[0,T] := \big\{(u_1(\cdot),\cdots,u_N(\cdot))|u_i(\cdot) \in L^2_{\mathcal{F}^i}(0,T;\mathbb{R})\big\},$$
    $$\mathscr{U}_i[0,T] := \big\{u_i(\cdot)|u_i(\cdot) \in L^2_{\mathcal{F}^i}(0,T;\mathbb{R})\big\},$$
    and the centralized control set for the followers as
    $$\mathscr{U}_c[0,T] := \big\{(u_1(\cdot),\cdots,u_N(\cdot))|u_i(\cdot) \in L^2_{\mathcal{F}}(0,T;\mathbb{R}), 1 \le i \le N\big\}.$$

    The cost functional of the leader is given by
    \begin{equation}\label{cost of leader}
    \hspace{-0.3mm}    J^N_0(u_0(\cdot), u^N(\cdot)) = \frac{1}{2} \mathbb{E} \left\{\int_0^T \left[||x_0(t) - \Gamma_0x^{(N)}(t) - \eta_0||^2_{Q_0} + ||u_0(t)||^2_{R_0}\right]dt + ||x_0(0)||^2_{H_0}\right\},
    \end{equation}
    where $u^N(\cdot) := (u_1(\cdot),\cdots,u_N(\cdot))$, $x^{(N)}(\cdot):= \frac{1}{N} \sum_{i=1}^{N} x_i(\cdot)$ is called the state average or mean field term of all followers, $Q_0$, $R_0$, $H_0$, $\Gamma_0$ and $\eta_0$ are scalar constants.
    Notice that $Q_0$, $\Gamma_0$ and $\eta_0$ determine the coupling between the leader and the mean field of the $N$ followers, $R_0$ is the control performance weighting parameter of the leader, and $H_0$ is the initial state cost parameter of the leader.

    For the $i$th follower, the cost functional is defined by
    \begin{equation}\label{cost of followers}
    \begin{aligned}
        &J^N_i(u_i(\cdot), u_{-i}(\cdot), u_0(\cdot)) \\
        &= \frac{1}{2} \mathbb{E} \left\{\int_0^T \left[||x_i(t) - \Gamma x^{(N)}(t) - \Gamma_1 x_0(t) - \eta||^2_Q + ||u_i(t)||^2_R + 2u_i(t)Lu_0(t)\right]dt + ||x_i(T)||^2_H\right\},
    \end{aligned}
    \end{equation}
    where $u_{-i}(\cdot) := (u_1(\cdot),\cdots,u_{i-1}(\cdot),u_{i+1}(\cdot),\cdots,u_N(\cdot))$, $Q$, $R$, $L$, $H$, $\Gamma$, $\Gamma_1$ and $\eta$ are scalar constants.
    $Q$, $\Gamma$, $\Gamma_1$ and $\eta$ determine the coupling between the $i$th follower, leader and followers' mean field, whereas $L$ influences the coupling between the leader and the $i$th follower through control term.
    Also, $R$ serves as the control performance weighting parameter of the $i$th follower, and $H$ represents the terminal state cost parameter.
    It is noteworthy that the followers are (weakly) coupled with each other through the mean field term $x^{(N)}$, and are (strongly) coupled with the leader's state $x_0$ and control $u_0$ included in their cost functionals.

    We posit the following assumptions:

    \noindent {\bf (A1)} $\{\xi_i\}, i = 1,2,\cdots,N$ are a sequence of independent and identically distributed (i.i.d., for short) random variables with $\mathbb{E}[\xi_i] = \bar{\xi}, i = 1,2,\cdots,N$, and there exists a constant $c$ such that $\sup_{1 \le i \le N} \mathbb{E} [||\xi_i||^2] \le c.$

    \noindent {\bf (A2)} $\{W_i(t), 0 \le i \le N \}$ are independent of each other, which are
    also independent of $\{\xi_i, 1 \le i \le N \}$.

    \noindent {\bf (A3)} $Q_0 \ge 0, R_0 > 0, H_0 \ge 0$ and $Q \ge 0, R > 0, H \ge 0$.

    Now, we present the precise definition of an $(\epsilon_1, \epsilon_2)$-Stackelberg equilibrium.

    \begin{mydef}\label{def2.1}
        A set of strategies $(u_0^*(\cdot),u_1^*(\cdot),\cdots,u_N^*(\cdot))$ is an $(\epsilon_1, \epsilon_2)$-Stackelberg equilibrium with respect to $\{J^N_i,0 \le i \le N\}$ if the following hold:

        (i) For a given strategy of the leader $u_0(\cdot) \in \mathscr{U}_0[0,T]$, $u^{N*}(\cdot) = (u_1^*(\cdot),\cdots,u_N^*(\cdot))$ constitutes an $\epsilon_1$-Nash equilibrium, if there exists a constant $\epsilon_1 \ge 0$ such that for all $i, 1 \le i \le N$,
        $$J^N_i(u_i^*(\cdot), u_{-i}^*(\cdot), u_0(\cdot)) \le \inf_{u_i(\cdot) \in \mathscr{U}_i[0,T]} J^N_i(u_i(\cdot), u_{-i}^*(\cdot), u_0(\cdot)) + \epsilon_1;$$

        (ii) There exists a constant $\epsilon_2 \ge 0$ such that
        $$J^N_0(u_0^*(\cdot), u^{N*}[\cdot;u_0^*(\cdot)]) \le \inf_{u_0(\cdot) \in \mathscr{U}_0[0,T]} J^N_0(u_0(\cdot), u^{N*}[\cdot;u_0(\cdot)]) + \epsilon_2.$$
    \end{mydef}

    In this paper, we investigate the following problem.

    \noindent {\bf (P0)}: Find an $(\epsilon_1, \epsilon_2)$-Stackelberg equilibrium solution to (\ref{cost of leader}), (\ref{cost of followers}), subject to (\ref{state}).

    \section{Centralized stratigies}

    \subsection{Mean field Nash games for the $N$ followers}

    In this subsection, we consider the mean field Nash game for the $N$ followers under an arbitrary strategy of the leader $u_0(\cdot) \in \mathscr{U}_0[0,T]$.
    We suppose $u_0(\cdot)$ is fixed. Then, due to first equation of ({\ref{state}}), $x_0(\cdot)$ is also fixed.
    We now consider the following game problem for $N$ followers.

    \noindent {\bf (P1)}: Minimize $J^N_i,i = 1,\cdots,N$ of (\ref{cost of followers}) over $u(\cdot) \in \mathscr{U}_c[0,T]$.

    For the sake of simplicity, the time variable $t$ will be omitted without ambiguity.

    We have the following result.

    \begin{mypro}\label{prop3.1}
        Under Assumption (A1)-(A3), let $u_0(\cdot) \in \mathscr{U}_0[0,T]$ be given, for the initial value $\xi_i, i = 1,\cdots,N$, {\bf (P1)} admits an optimal control $\check{u}_i(\cdot) \in L^2_{\mathcal{F}}(0,T;\mathbb{R}), i = 1,\cdots,N$, if and only if the following two conditions hold:

        (i) The adapted solution $(\check{x}_i(\cdot), \check{p}_i(\cdot), \check{q}^j_i(\cdot), i = 1,\cdots,N, j = 0,1,\cdots,N)$ to the FBSDE
        \begin{equation}\label{adjoint FBSDE of followers}
            \left\{
            \begin{aligned}
                d\check{x}_i &= \big[A\check{x}_i + B\check{u}_i + Fx_0 + Gu_0 + f\big]dt + DdW_i,\\
                d\check{p}_i &= -\left[A\check{p}_i + \left(1 - \frac{\Gamma}{N}\right)Q\left(\check{x}_i - \Gamma \check{x}^{(N)} - \Gamma_1 x_0 - \eta\right)\right]dt + \sum_{j=0}^{N} \check{q}_i^j dW_j,\quad t\in[0,T],\\
                \check{x}_i(0) &= \xi_i ,\quad \check{p}_i(T) = H\check{x}_i(T) , \quad i = 1,\cdots,N,
            \end{aligned}
            \right.
        \end{equation}
        satisfies the following stationarity condition:
        \begin{equation}\label{stationarity condition of followers}
            B\check{p}_i + R\check{u}_i + Lu_0 = 0, \quad a.e.,\ a.s., \quad i = 1,\cdots,N.
        \end{equation}

        (ii) For $i = 1,\cdots,N$, the following convexity condition holds:
        \begin{equation}\label{convexity condition of followers}
            \mathbb{E} \left\{\int_0^T \left[Q \left(1 - \frac{\Gamma}{N}\right)^2 \tilde{x}_i^2 + Ru_i^2\right]dt + H \tilde{x}_i^2(T)\right\} \ge 0,\ i = 1,\cdots,N, \quad \forall u_i(\cdot) \in L^2_{\mathcal{F}}(0,T;\mathbb{R}),
        \end{equation}
        where $\tilde{x}_i(\cdot)$ is the solution to the following {\it random differential equation} (RDE):
        \begin{equation}\label{random differential equation of followers}
            \left\{
            \begin{aligned}
                &d\tilde{x}_i = [A\tilde{x}_i + Bu_i]dt,\\
                &\tilde{x}_i(0) = 0.
            \end{aligned}
            \right.
        \end{equation}
        \label{pro1}
    \end{mypro}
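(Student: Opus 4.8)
The plan is to treat the $i$-th follower's problem, with the controls $\check u_j$ $(j\neq i)$ of all other followers held fixed, as a single standard LQ optimal control problem and to analyze it by a convex (perturbation) variational argument. Fix $i$ and let $\check u_i$ be a candidate optimal control with corresponding state $\check x_i$. For an arbitrary direction $v_i\in L^2_{\mathcal F}(0,T;\mathbb R)$ consider the perturbed control $\check u_i+\epsilon v_i$. Because the dynamics in (\ref{state}) are affine and only $u_i$ is perturbed (with $x_0$, $u_0$ and the other followers' states unchanged), the state variation $\tilde x_i:=\partial_\epsilon x_i^\epsilon|_{\epsilon=0}$ solves exactly the RDE (\ref{random differential equation of followers}) (with $v_i$ in place of the control); in particular it has no diffusion part and $\tilde x_i(0)=0$. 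The crucial bookkeeping point is that perturbing follower $i$ alone shifts the empirical mean by $\partial_\epsilon x^{(N),\epsilon}|_{\epsilon=0}=\tfrac1N\tilde x_i$, so the tracking error $x_i-\Gamma x^{(N)}-\Gamma_1 x_0-\eta$ varies in the direction $\big(1-\tfrac{\Gamma}{N}\big)\tilde x_i$; this is the source of the factor $1-\tfrac{\Gamma}{N}$ appearing in (\ref{adjoint FBSDE of followers})--(\ref{convexity condition of followers}).

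Next I would expand the cost functional exactly. Since $J^N_i$ is quadratic in $u_i$, substituting $\check u_i+v_i$ yields a second-order Taylor expansion with no remainder,
\begin{equation*}
J^N_i(\check u_i+v_i,\check u_{-i},u_0)=J^N_i(\check u_i,\check u_{-i},u_0)+\mathcal I_1(v_i)+\tfrac12\,\mathcal I_2(v_i),
\end{equation*}
where, writing $\check\theta_i:=\check x_i-\Gamma\check x^{(N)}-\Gamma_1 x_0-\eta$, the linear term is
\begin{equation*}
\mathcal I_1(v_i)=\mathbb E\Big\{\int_0^T\Big[\big(1-\tfrac{\Gamma}{N}\big)Q\check\theta_i\,\tilde x_i+(R\check u_i+Lu_0)v_i\Big]dt+H\check x_i(T)\tilde x_i(T)\Big\},
\end{equation*}
and the quadratic term $\mathcal I_2(v_i)=\mathbb E\big\{\int_0^T[Q(1-\tfrac{\Gamma}{N})^2\tilde x_i^2+Rv_i^2]dt+H\tilde x_i^2(T)\big\}$ is precisely the left-hand side of (\ref{convexity condition of followers}) with $u_i$ replaced by $v_i$.

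The key step is to rewrite the state-dependent part of $\mathcal I_1$ purely in terms of the control direction via duality. I would introduce the adjoint process $\check p_i$ as the solution of the BSDE in (\ref{adjoint FBSDE of followers}) with terminal condition $\check p_i(T)=H\check x_i(T)$, whose driver $-\big[A\check p_i+(1-\tfrac{\Gamma}{N})Q\check\theta_i\big]$ is chosen precisely so that the duality cancels the tracking terms. Applying It\^o's formula to $\check p_i\tilde x_i$ on $[0,T]$, using that $\tilde x_i$ has no martingale part and $\tilde x_i(0)=0$, and taking expectations gives
\begin{equation*}
\mathbb E\Big\{\int_0^T\big(1-\tfrac{\Gamma}{N}\big)Q\check\theta_i\tilde x_i\,dt+H\check x_i(T)\tilde x_i(T)\Big\}=\mathbb E\int_0^T B\check p_i\,v_i\,dt,
\end{equation*}
so that $\mathcal I_1(v_i)=\mathbb E\int_0^T(B\check p_i+R\check u_i+Lu_0)v_i\,dt$. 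Coupling this adjoint BSDE with the state SDE produces the FBSDE (\ref{adjoint FBSDE of followers}).

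With the exact decomposition in hand, both implications follow. For sufficiency, if the stationarity condition (\ref{stationarity condition of followers}) holds then $\mathcal I_1(v_i)=0$ for every $v_i$, and if the convexity condition (\ref{convexity condition of followers}) holds then $\mathcal I_2(v_i)\ge0$, whence $\check u_i$ is optimal. For necessity, replacing $v_i$ by $\epsilon v_i$ and letting $\epsilon\to0$ forces $\mathcal I_1(v_i)=0$ for all $v_i$; since $\mathcal I_1(v_i)=\mathbb E\int_0^T(B\check p_i+R\check u_i+Lu_0)v_i\,dt$ and $v_i$ is arbitrary, this yields (\ref{stationarity condition of followers}) a.e., a.s., and then nonnegativity of $J^N_i(\check u_i+v_i)-J^N_i(\check u_i)=\tfrac12\mathcal I_2(v_i)$ yields (\ref{convexity condition of followers}). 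I expect the main obstacle to lie in the bookkeeping: correctly propagating the mean-field sensitivity factor $1-\tfrac{\Gamma}{N}$ into both the adjoint driver and the convexity functional, and verifying that the adjoint driver is exactly the one making the It\^o duality cancel all tracking terms; the well-posedness of the decoupled linear FBSDE and of the RDE is standard under (A3).
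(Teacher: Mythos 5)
Your proposal is correct and follows essentially the same route as the paper's proof: the same exact quadratic expansion of $J_i^N$ under a perturbation of $u_i$ alone (with the $1-\frac{\Gamma}{N}$ factor coming from the shift of the empirical mean), the same It\^o duality between $\check p_i$ and the variation process $\tilde x_i$ to identify the linear term with $\mathbb E\int_0^T(B\check p_i+R\check u_i+Lu_0)u_i\,dt$, and the same equivalence argument for both directions.
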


    \begin{proof}
        We consider the $i$th follower.
        For given $\xi_i \in L^2_{\mathcal{F}_0^i}(\Omega; \mathbb{R})$, $u_0(\cdot) \in \mathscr{U}_0[0,T]$, $\check{u}_i(\cdot) \in L^2_{\mathcal{F}}(0,T;\mathbb{R})$, suppose that $(\check{x}_i(\cdot), \check{p}_i(\cdot), \check{q}^j_i(\cdot), j = 0,1,\cdots,N)$ is an adapted solution to FBSDE (\ref{adjoint FBSDE of followers}).
        For any $u_i(\cdot) \in L^2_{\mathcal{F}}(0,T;\mathbb{R})$ and $\varepsilon \in \mathbb{R}$, let $x_i^{\varepsilon}(\cdot)$ be the solution to the following perturbed state equation:
        \begin{equation}\label{perturbed state of followers}
            \left\{
            \begin{aligned}
                dx_i^{\varepsilon} &= \big[Ax_i^{\varepsilon} + B(\check{u}_i + \varepsilon u_i) + Fx_0 + Gu_0 + f\big]dt + DdW_i,\\
                x_i^{\varepsilon}(0) &= \xi_i.
            \end{aligned}
            \right.
        \end{equation}
        Then, $\tilde{x}_i^{\varepsilon}(\cdot) := \frac{x_i^{\varepsilon}(\cdot) - \check{x}_i(\cdot)}{\varepsilon}$ is independent of $\varepsilon$ and satisfies (\ref{random differential equation of followers}).

        Applying It\^{o}'s formula to $\check{p}_i(\cdot) \tilde{x}_i(\cdot)$, integrating from 0 to $T$, and taking the expectation, we have
        \begin{equation*}
        \begin{aligned}
            &\mathbb{E} \big[H\check{x}_i(T) \tilde{x}_i(T)\big] = \mathbb{E} \big[\check{p}_i(T) \tilde{x}_i(T) - \check{p}_i(0) \tilde{x}_i(0)\big] \\
            &= \mathbb{E} \int_0^T \left\{-\left[A\check{p}_i + \left(1 - \frac{\Gamma}{N}\right)Q\left(\check{x}_i - \Gamma \check{x}^{(N)} - \Gamma_1 x_0 - \eta\right)\right]\tilde{x}_i + \big[A\tilde{x}_i + Bu_i\big]\check{p}_i\right\}dt \\
            &= \mathbb{E} \int_0^T \left\{-\left(1 - \frac{\Gamma}{N}\right)Q\left(\check{x}_i - \Gamma \check{x}^{(N)} - \Gamma_1 x_0 - \eta\right)\tilde{x}_i + Bu_i\check{p}_i\right\}dt.
        \end{aligned}
        \end{equation*}
        Hence,
        \begin{equation*}
        \begin{aligned}
            &J^N_i(\check{u}_i(\cdot) + \varepsilon u_i(\cdot), u_{-i}(\cdot), u_0(\cdot)) - J^N_i(\check{u}_i(\cdot), u_{-i}(\cdot), u_0(\cdot))\\
            &= \frac{1}{2} \mathbb{E} \left\{\int_0^T \left[||x_i^{\varepsilon} - \Gamma x^{\varepsilon(N)} - \Gamma_1 x_0 - \eta||^2_Q + ||\check{u}_i + \varepsilon u_i||^2_R + 2(\check{u}_i + \varepsilon u_i)Lu_0\right]dt + ||x_i^{\varepsilon}(T)||^2_H\right\}\\
            &\quad - \frac{1}{2} \mathbb{E} \left\{\int_0^T \left[||\check{x}_i - \Gamma \check{x}^{(N)} - \Gamma_1 x_0 - \eta||^2_Q + ||\check{u}_i||^2_R + 2\check{u}_iLu_0\right]dt + ||\check{x}_i(T)||^2_H\right\} \\
            &= \frac{1}{2} \mathbb{E} \left\{\int_0^T \left[||\left(1 - \frac{\Gamma}{N}\right)(\check{x}_i + \varepsilon \tilde{x}_i) - \Gamma x^{(N-1)}_{-i} - \Gamma_1 x_0 - \eta||^2_Q + ||\check{u}_i + \varepsilon u_i||^2_R \right. \right.\\
            &\qquad\qquad + 2(\check{u}_i + \varepsilon u_i)Lu_0\bigg]dt+ ||(\check{x}_i(T) + \varepsilon \tilde{x}_i(T))||^2_H\bigg\} \\
            &\quad - \frac{1}{2} \mathbb{E} \left\{\int_0^T \left[||\left(1 - \frac{\Gamma}{N}\right)\check{x}_i - \Gamma x^{(N-1)}_{-i} - \Gamma_1 x_0 - \eta||^2_Q + ||\check{u}_i||^2_R + 2\check{u}_iLu_0\right]dt + ||\check{x}_i(T)||^2_H\right\} \\
            &= \frac{1}{2} \varepsilon^2 \mathbb{E} \left\{\int_0^T \bigg[Q\left(1 - \frac{\Gamma}{N}\right)^2 \tilde{x}_i^2 + Ru_i^2\bigg]dt + H \tilde{x}_i^2(T)\right\} \notag \\
            &\quad + \varepsilon \mathbb{E} \left\{\int_0^T \left[Q\left(1 - \frac{\Gamma}{N}\right) \tilde{x}_i \left(\check{x}_i - \Gamma \check{x}^{(N)} - \Gamma_1 x_0 - \eta\right) + u_iR\check{u}_i + u_iLu_0\right]dt + H \check{x}_i(T) \tilde{x}_i(T)\right\} \\
            &= \frac{1}{2} \varepsilon^2 \mathbb{E} \left\{\int_0^T \bigg[Q\left(1 - \frac{\Gamma}{N}\right)^2 \tilde{x}_i^2 + Ru_i^2\bigg]dt + H \tilde{x}_i^2(T)\right\} + \varepsilon \mathbb{E} \left\{\int_0^T u_i\left[B\check{p}_i + R\check{u}_i + Lu_0\right]dt\right\},
        \end{aligned}
        \end{equation*}
        where only here $x^{\varepsilon(N)}(\cdot) := \frac{x_i^{\varepsilon}(\cdot)}{N} + \frac{1}{N} \sum_{j \ne i} x_j(\cdot)$, $\check{x}^{(N)}(\cdot) := \frac{\check{x}_i(\cdot)}{N} + \frac{1}{N} \sum_{j \ne i} x_j(\cdot)$, and $x^{(N-1)}_{-i}(\cdot) = \frac{1}{N} \sum_{j \ne i} x_j(\cdot)$.
        Therefore,
        $$J^N_i(\check{u}_i(\cdot), u_{-i}(\cdot), u_0(\cdot)) \le J^N_i(\check{u}_i(\cdot) + \varepsilon u_i(\cdot), u_{-i}(\cdot), u_0(\cdot))$$
        if and only if (\ref{stationarity condition of followers}) and (\ref{convexity condition of followers}) hold. The proof is complete.
    \end{proof}

    Based on Assumption (A3), we can figure out that the open-loop optimal strategies for followers are
    \begin{equation}\label{open-loop optimal strategies of followers}
        \check{u}_i = -R^{-1}[B\check{p}_i + Lu_0],\quad a.e.,\ a.s.,\quad  i = 1,\cdots,N,
    \end{equation}
    so the related Hamiltonian system can be represented by
    \begin{equation}\label{Hamiltonian system of followers}
        \left\{
        \begin{aligned}
            d\check{x}_i &= \big[A\check{x}_i - BR^{-1}B\check{p}_i + Fx_0 + (G - BR^{-1}L)u_0 + f\big]dt + DdW_i,\\
            d\check{p}_i &= -\left[A\check{p}_i + \left(1 - \frac{\Gamma}{N}\right)Q\left(\check{x}_i - \Gamma \check{x}^{(N)} - \Gamma_1 x_0 - \eta\right)\right]dt + \sum_{j=0}^{N} \check{q}_i^j dW_j,\\
            \check{x}_i(0) &= \xi_i ,\quad \check{p}_i(T) = H\check{x}_i(T) , \quad i = 1,\cdots,N.
        \end{aligned}
        \right.
    \end{equation}

    We consider the transformation $\check{p}_i(\cdot) = P_N(\cdot)\check{x}_i(\cdot) + K_N(\cdot)\check{x}^{(N)}(\cdot) + \check{\phi}_N(\cdot), i = 1,\cdots,N$, where $P_N(\cdot)$, $K_N(\cdot)$ are differential functions with $P_N(T) = H$, $K_N(T) = 0$, process pair $(\phi_N(\cdot),V_N(\cdot))$ satisfies a BSDE as follows:
    $$d\phi_N(t)= \Lambda_N(t)dt + V_N(t)dW_0(t), \quad \phi_N(T) = 0,$$
    where process $\Lambda_N(\cdot)$ will be determined later.
    By It\^{o}'s formula, we get
    \begin{equation*}
    \begin{aligned}
        d\check{p}_i &= \dot{P}_N \check{x}_i dt + P_N\Big\{\big[A\check{x}_i - BR^{-1}B\big(P_N\check{x}_i + K_N\check{x}^{(N)} + \check{\phi}_N\big) + Fx_0 + (G - BR^{-1}L)u_0 + f\big]dt  \\
        &\quad  + DdW_i\Big\} + \dot{K}_N \check{x}^{(N)} dt + K_N\Big\{\big[A\check{x}^{(N)} - BR^{-1}B\big((P_N + K_N)\check{x}^{(N)} + \check{\phi}_N\big) + Fx_0 \\
        &\quad + (G - BR^{-1}L)u_0 + f\big]dt + \frac{1}{N} \sum_{j=1}^{N}DdW_j\Big\} + \check{\Lambda}_Ndt + \check{V}_NdW_0 \notag \\
        &= -\left[A\left(P_N\check{x}_i + K_N\check{x}^{(N)} + \check{\phi}_N\right) + \left(1 - \frac{\Gamma}{N}\right)Q\left(\check{x}_i - \Gamma \check{x}^{(N)} - \Gamma_1 x_0 - \eta\right)\right]dt + \sum_{j=0}^{N} \check{q}_i^j dW_j,
    \end{aligned}
    \end{equation*}
    where the last equality is due to the second equation of (\ref{Hamiltonian system of followers}).
    Comparing the coefficients of the corresponding terms, we have
    $$\check{q}_i^i = P_ND + \frac{K_ND}{N},\quad \check{q}_i^j = \frac{K_ND}{N},\quad 1 \le j \ne i \le N, \quad \check{q}_i^0 = \check{V}_N,$$
    \begin{equation}\label{Riccati-1 of followers}
        \dot{P}_N + AP_N + P_NA - P_NBR^{-1}BP_N + \left(1 - \frac{\Gamma}{N}\right)Q = 0,\quad P_N(T) = H,
    \end{equation}
    \begin{equation}\label{Riccati-2 of followers}
        \dot{K}_N + AK_N + K_NA - P_NBR^{-1}BK_N - K_NBR^{-1}B(P_N + K_N) - \left(1 - \frac{\Gamma}{N}\right)Q\Gamma = 0,\quad K_N(T) = 0,
    \end{equation}
    \begin{equation}\label{BSDE of followers}
    \begin{aligned}
        d\check{\phi}_N &= -\bigg\{\left[A - (P_N + K_N)BR^{-1}B\right]\check{\phi}_N + (P_N + K_N)(G - BR^{-1}L)u_0 + (P_N + K_N)Fx_0 \\
        &\quad + (P_N + K_N)f - \left(1 - \frac{\Gamma}{N}\right)Q(\Gamma_1x_0 + \eta)\bigg\}dt + \check{V}_NdW_0,\quad \phi_N(T) = 0.
    \end{aligned}
    \end{equation}
    Let $\Pi_N(\cdot) := P_N(\cdot) + K_N(\cdot)$, then $\Pi_N(\cdot)$ satisfies
    \begin{equation}\label{Riccati-1+2 of followers}
        \dot{\Pi}_N + A\Pi_N + \Pi_NA - \Pi_NBR^{-1}B\Pi_N + \left(1 - \frac{\Gamma}{N}\right)Q - \left(1 - \frac{\Gamma}{N}\right)Q\Gamma = 0,\quad \Pi_N(T) = H,
    \end{equation}
    thus (\ref{BSDE of followers}) can be rewritten as
    \begin{equation}\label{BSDE of followers-}
    \begin{aligned}
        d\check{\phi}_N &= -\bigg\{\left[A - \Pi_NBR^{-1}B\right]\check{\phi}_N + \Pi_N(G - BR^{-1}L)u_0 + \Pi_NFx_0  \\
        &\quad + \Pi_Nf - \left(1 - \frac{\Gamma}{N}\right)Q(\Gamma_1x_0 + \eta)\bigg\}dt + \check{V}_NdW_0,\quad \phi_N(T) = 0.
    \end{aligned}
    \end{equation}

    Note that (\ref{Riccati-1 of followers}) is a symmetric Riccati differential equation, if it satisfies Assumption (A3) and $1 - \frac{\Gamma}{N} \ge 0$, then it admits a unique solution.
    Similarly, (\ref{Riccati-1+2 of followers}) is a symmetric Riccati differential equation, if it satisfies Assumption (A3) and $(1 - \frac{\Gamma}{N})(1 - \Gamma) \ge 0$, then it admits a unique solution.
    Thus (\ref{Riccati-2 of followers}) admits a unique solution.
    And then (\ref{BSDE of followers-}) admits a unique solution.

    From the above discussion, Proposition \ref{pro1} and Theorem 4.1 on page 47 of Ma and Yong \cite{Ma-Yong-99}, we have the following result.
    \begin{mythm}\label{thm3.1}
        Under Assumptions (A1)-(A3), for given $u_0(\cdot) \in \mathscr{U}_0[0,T]$, if $1 - \frac{\Gamma}{N} \ge 0$ and $(1 - \frac{\Gamma}{N})(1 - \Gamma) \ge 0$, then Problem {\bf (P1)} admits a unique solution
    \begin{equation}\label{optimal strategy of followers}
     \check{u}_i = -R^{-1}\left[B\big(P_N\check{x}_i + K_N\check{x}^{(N)} + \check{\phi}_N\big) + Lu_0\right],\quad i = 1,\cdots,N.
    \end{equation}
    \end{mythm}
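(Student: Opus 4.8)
The plan is to assemble Theorem~\ref{thm3.1} from Proposition~\ref{pro1} together with the decoupling computation carried out just above the statement, separating the argument into a uniqueness half (via convexity) and an existence half (via the Riccati--BSDE system). By Proposition~\ref{pro1}, a centralized profile $(\check{u}_1,\cdots,\check{u}_N)$ is a solution of {\bf (P1)} if and only if, for every $i$, the adjoint system (\ref{Hamiltonian system of followers}) together with the stationarity condition (\ref{stationarity condition of followers}) holds and the convexity condition (\ref{convexity condition of followers}) is satisfied. Under Assumption (A3) one has $Q\ge 0$, $R>0$, $H\ge 0$, so each integrand in (\ref{convexity condition of followers}) is nonnegative and the condition holds automatically; moreover, because $R>0$ the left-hand side is strictly positive whenever $u_i\not\equiv 0$, which makes each follower's cost strictly convex in its own control. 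Consequently the best response is unique, and hence the equilibrium of {\bf (P1)}, if it exists, is unique. It therefore remains to produce one.

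For existence I would solve the coupled Hamiltonian system (\ref{Hamiltonian system of followers}) through the ansatz $\check{p}_i=P_N\check{x}_i+K_N\check{x}^{(N)}+\check{\phi}_N$, and this is exactly where the two sign hypotheses are used. The assumption $1-\frac{\Gamma}{N}\ge 0$ renders the zeroth-order coefficient $(1-\frac{\Gamma}{N})Q$ of the symmetric Riccati equation (\ref{Riccati-1 of followers}) nonnegative, while $(1-\frac{\Gamma}{N})(1-\Gamma)\ge 0$ renders the corresponding coefficient $(1-\frac{\Gamma}{N})(1-\Gamma)Q$ of (\ref{Riccati-1+2 of followers}) nonnegative; combined with $R>0$ and the nonnegative terminal data $H$, the standard existence theory for symmetric Riccati differential equations supplies unique bounded solutions $P_N(\cdot)$ and $\Pi_N(\cdot)$ on $[0,T]$. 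Setting $K_N:=\Pi_N-P_N$ then solves (\ref{Riccati-2 of followers}), and since $P_N$ and $K_N$ are bounded, the linear BSDE (\ref{BSDE of followers-}) --- whose inhomogeneous terms are built from $u_0$, $x_0$, $f$ and $\eta$, all in $L^2$ --- admits a unique adapted solution $(\check{\phi}_N,\check{V}_N)$.

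Finally I would confirm that this construction returns a genuine adapted solution of (\ref{Hamiltonian system of followers}). Averaging the resulting closed-loop forward equation over $i$ yields a well-posed linear SDE for $\check{x}^{(N)}$; solving it, then each $\check{x}_i$, and setting $\check{p}_i=P_N\check{x}_i+K_N\check{x}^{(N)}+\check{\phi}_N$ with $\check{q}_i^j$ read off from the coefficient matching, one runs the It\^{o} computation preceding the theorem in reverse to check that the backward equation and the stationarity relation (\ref{stationarity condition of followers}) hold --- this is the step for which Theorem~4.1 of Ma and Yong \cite{Ma-Yong-99} guarantees adapted solvability of the FBSDE. By Proposition~\ref{pro1} the profile (\ref{optimal strategy of followers}) is then optimal, and by the convexity argument it is the unique such profile. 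The main obstacle is the global well-posedness of the two Riccati equations under the sign conditions; once their bounded solutions are in hand, the remainder is routine linear BSDE theory and a direct verification.
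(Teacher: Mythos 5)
Your overall route is the same as the paper's: Proposition \ref{pro1} reduces {\bf (P1)} to the coupled Hamiltonian system (\ref{Hamiltonian system of followers}) plus the stationarity and convexity conditions, the convexity condition is automatic under (A3), and the system is decoupled by the ansatz $\check{p}_i = P_N\check{x}_i + K_N\check{x}^{(N)} + \check{\phi}_N$, with the two sign hypotheses guaranteeing solvability of the symmetric Riccati equations (\ref{Riccati-1 of followers}) and (\ref{Riccati-1+2 of followers}), hence of (\ref{Riccati-2 of followers}) and the BSDE (\ref{BSDE of followers-}); Theorem 4.1 of \cite{Ma-Yong-99} then yields the adapted solution. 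Your existence half is essentially the paper's argument, written out with somewhat more care about the verification step.

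The gap is in your uniqueness half. Strict convexity of $J_i^N$ in $u_i$ (with $u_{-i}$ and $u_0$ frozen) gives uniqueness of each follower's best response to a \emph{fixed} profile of the others; but a solution of {\bf (P1)} is a Nash equilibrium, i.e.\ a fixed point of the best-response map, and uniqueness of best responses does not imply uniqueness of the fixed point. In this LQ setting the best-response map is affine, so the equilibrium set is an affine subspace that could a priori contain more than one point; the inference ``the best response is unique, and hence the equilibrium, if it exists, is unique'' is therefore not valid as stated. The correct uniqueness argument --- the one the paper implicitly relies on --- is that, since convexity always holds under (A3), Proposition \ref{pro1} together with (\ref{stationarity condition of followers}) puts the solutions of {\bf (P1)} in one-to-one correspondence with adapted solutions of the fully coupled FBSDE (\ref{Hamiltonian system of followers}), and it is the \emph{unique} adapted solvability of that FBSDE, obtained from the Riccati decoupling via Theorem 4.1 of \cite{Ma-Yong-99}, that delivers uniqueness of the equilibrium. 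You already have every ingredient for this; you only need to route uniqueness through the coupled FBSDE rather than through convexity of the individual costs.
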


    \subsection{Optimal strategy of the leader}

    Upon implementing the strategies of followers $\check{u}_i(\cdot), i = 1,\cdots,N$ according to (\ref{optimal strategy of followers}), we delve into an optimal control problem for the leader.

    {\bf (P2)}: Minimize $\check{J}^N_0(u_0(\cdot))$ over $u_0(\cdot) \in \mathscr{U}_0[0,T]$, where
    \begin{equation}\label{cost of leader-}
        \check{J}^N_0(u_0(\cdot)) := \frac{1}{2} \mathbb{E} \left\{\int_0^T \left[||x_0(t) - \Gamma_0\check{x}^{(N)}(t) - \eta_0||^2_{Q_0} + ||u_0(t)||^2_{R_0}\right]dt + ||x_0(0)||^2_{H_0}\right\},
    \end{equation}
    subject to
    \begin{equation}\label{state of leader-}
    \left\{
    \begin{aligned}
        dx_0 &= \big[A_0x_0 + B_0u_0 + C_0z_0 + f_0\big]dt + z_0dW_0,\quad x_0(T) = \xi_0,\\
          d\check{x}_i &= \big[(A - BR^{-1}BP_N)\check{x}_i - BR^{-1}BK_N\check{x}^{(N)} - BR^{-1}B\check{\phi}_N + Fx_0  \\
        &\quad + (G - BR^{-1}L)u_0 + f\big]dt + DdW_i,\quad \check{x}_i(0) = \xi_i, \quad i = 1,\cdots,N,\\
         d\check{\phi}_N &= -\bigg[(A - \Pi_NBR^{-1}B)\check{\phi}_N + \Pi_N(G - BR^{-1}L)u_0 + \Pi_NFx_0  \\
        &\quad + \Pi_Nf - \left(1 - \frac{\Gamma}{N}\right)Q(\Gamma_1x_0 + \eta)\bigg]dt + \check{V}_NdW_0,\quad \check{\phi}_N(T) = 0.
    \end{aligned}
    \right.
    \end{equation}
    Note that here $\check{x}^{(N)}(\cdot)$ means that all $x_i(\cdot), i = 1,\cdots,N$ take the optimal $\check{x}_i(\cdot)$, i.e., $\check{x}^{(N)}(\cdot) = \frac{1}{N} \sum_{i=1}^{N} \check{x}_i(\cdot)$.
    Denote $W^{(N)}(\cdot) := \frac{1}{N} \sum_{i=1}^{N} W_i(\cdot)$, and $\xi^{(N)} := \frac{1}{N} \sum_{i=1}^{N} \xi_i$.

    The result of this subsection is as follows.

    \begin{mythm}\label{thm3.2}
    Under Assumptions (A1)-(A3), let the followers adopt the optimal strategy (\ref{optimal strategy of followers}). Then Problem {\bf (P2)} admits an optimal control $\check{u}_0(\cdot) \in \mathscr{U}_0[0,T]$, if and only if the following two conditions hold:

    (i) The adapted solution $(\check{x}_0(\cdot), \check{z}_0(\cdot), \check{x}^{(N)}(\cdot), \check{\phi}_N(\cdot), \check{V}_N(\cdot), \check{y}_0(\cdot), \check{y}^{(N)}(\cdot), \check{\beta}^{(N)}(\cdot), \check{\psi}_N(\cdot))$ to the FBSDE
    \begin{equation}\label{adjoint FBSDE of leader}
    \left\{
    \begin{aligned}
                d\check{x}_0 = &\ \big[A_0\check{x}_0 + B_0\check{u}_0 + C_0\check{z}_0 + f_0\big]dt + \check{z}_0dW_0, \quad x_0(T) = \xi_0, \\
                d\check{x}^{(N)} = &\ \big[(A - BR^{-1}B\Pi_N)\check{x}^{(N)} - BR^{-1}B\check{\phi}_N + F\check{x}_0 + (G - BR^{-1}L)\check{u}_0 + f\big]dt \\
                &+ DdW^{(N)}, \quad \check{x}^{(N)}(0) = \xi^{(N)}, \\
                d\check{\phi}_N = &\ -\bigg[(A - \Pi_NBR^{-1}B)\check{\phi}_N + \Pi_N(G - BR^{-1}L)\check{u}_0 + \Pi_NF\check{x}_0 \\
                &+ \Pi_Nf - \left(1 - \frac{\Gamma}{N}\right)Q(\Gamma_1\check{x}_0 + \eta)\bigg]dt + \check{V}_NdW_0, \quad \check{\phi}_N(T) = 0, \\
                d\check{y}_0 = &\ -\left[Q_0(\check{x}_0 - \Gamma_0 \check{x}^{(N)} - \eta_0) + A_0\check{y}_0 + F\check{y}^{(N)} - \left(\Pi_NF - \left(1 - \frac{\Gamma}{N}\right)Q\Gamma_1\right)\check{\psi}_N\right]dt \\
                &\ - C_0\check{y}_0 dW_0, \quad \check{y}_0(0) = H_0\check{x}_0(0), \\
                d\check{y}^{(N)} = &\ -\big[(A - \Pi_NBR^{-1}B)\check{y}^{(N)} - Q_0\Gamma_0(\check{x}_0 - \Gamma_0\check{x}^{(N)} - \eta_0)\big]dt \\
                &\ + \check{\beta}^{(N)} dW^{(N)}, \quad \check{y}^{(N)}(T) = 0, \\
                d\check{\psi}_N = &\ \big[BR^{-1}B\check{y}^{(N)} + (A - \Pi_NBR^{-1}B)\check{\psi}_N\big]dt, \quad \check{\psi}_N(0) = 0,
    \end{aligned}
    \right.
    \end{equation}
    satisfies the following stationarity condition:
    \begin{equation}\label{stationarity condition of leader}
    B_0\check{y}_0 + R_0\check{u}_0 + (G - BR^{-1}L)\check{y}^{(N)} - \Pi_N(G - BR^{-1}L)\check{\psi}_N = 0, \quad a.e.,\ a.s..
    \end{equation}

    (ii) The following convexity condition holds:
    \begin{equation}\label{convexity condition of leader}
            \mathbb{E} \left\{\int_0^T \left[Q_0(\tilde{x}_0 - \Gamma_0\tilde{x}^{(N)})^2 + R_0u_0^2\right]dt + H_0 \tilde{x}_0^2(0)\right\} \ge 0, \quad \forall u_0(\cdot) \in \mathscr{U}_0[0,T],
    \end{equation}
     where $(\tilde{x}_0(\cdot), \tilde{z}_0(\cdot), \tilde{x}^{(N)}(\cdot), \tilde{\phi}_N(\cdot), \tilde{V}_N(\cdot))$ is the solution to the following FBSDE
    \begin{equation}\label{random FBSDE of leader}
            \left\{
            \begin{aligned}
                d\tilde{x}_0 = &\ \big[A_0\tilde{x}_0 + B_0u_0 + C_0\tilde{z}_0\big]dt + \tilde{z}_0dW_0, \quad \tilde{x}_0(T) = 0, \\
                d\tilde{x}^{(N)} = &\ \big[(A - BR^{-1}B\Pi_N)\tilde{x}^{(N)} - BR^{-1}B\tilde{\phi}_N + F\tilde{x}_0 + (G - BR^{-1}L)u_0\big]dt, \quad \tilde{x}^{(N)}(0) = 0, \\
                d\tilde{\phi}_N = & -\left[(A - \Pi_NBR^{-1}B)\tilde{\phi}_N + \left(\Pi_NF - \left(1 - \frac{\Gamma}{N}\right)Q\Gamma_1\right)\tilde{x}_0\right.\\
                &\quad + \Pi_N(G - BR^{-1}L)u_0\bigg]dt + \tilde{V}_NdW_0, \quad \tilde{\phi}_N(T) = 0.
            \end{aligned}
            \right.
    \end{equation}
    \label{thm2}
    \end{mythm}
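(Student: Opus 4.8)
The plan is to prove this by the convex variational route (the stochastic maximum principle), paralleling the proof of Proposition~\ref{prop3.1} but now for the leader's fully coupled forward-backward state system \eqref{state of leader-}. Fix a candidate optimal control $\check{u}_0(\cdot)$ with associated state triple $(\check{x}_0,\check{x}^{(N)},\check{\phi}_N)$. For an arbitrary $u_0(\cdot)\in\mathscr{U}_0[0,T]$ and $\varepsilon\in\mathbb{R}$, perturb $u_0^\varepsilon=\check{u}_0+\varepsilon u_0$. Because every equation in \eqref{state of leader-} is affine in the state and control, the scaled differences $\tilde{x}_0:=(x_0^\varepsilon-\check{x}_0)/\varepsilon$, $\tilde{x}^{(N)}:=(\check{x}^{(N),\varepsilon}-\check{x}^{(N)})/\varepsilon$, $\tilde{\phi}_N:=(\check{\phi}_N^\varepsilon-\check{\phi}_N)/\varepsilon$ are independent of $\varepsilon$ and solve precisely the variational FBSDE \eqref{random FBSDE of leader}, since the constant terms $f_0,f,\eta$ drop out under differencing.

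First I would expand $\check{J}^N_0(\check{u}_0+\varepsilon u_0)-\check{J}^N_0(\check{u}_0)$. As the cost \eqref{cost of leader-} is quadratic, this difference splits into a term of order $\varepsilon^2$,
\[
\frac{1}{2}\varepsilon^2\,\mathbb{E}\Big\{\int_0^T\big[Q_0(\tilde{x}_0-\Gamma_0\tilde{x}^{(N)})^2+R_0u_0^2\big]dt+H_0\tilde{x}_0^2(0)\Big\},
\]
which is exactly the left-hand side of \eqref{convexity condition of leader}, and a term of order $\varepsilon$,
\[
\varepsilon\,\mathbb{E}\Big\{\int_0^T\big[Q_0(\check{x}_0-\Gamma_0\check{x}^{(N)}-\eta_0)(\tilde{x}_0-\Gamma_0\tilde{x}^{(N)})+R_0\check{u}_0u_0\big]dt+H_0\check{x}_0(0)\tilde{x}_0(0)\Big\}.
\]
Since the difference is a quadratic polynomial $a\varepsilon^2+b\varepsilon$ in $\varepsilon$, optimality of $\check{u}_0$ is equivalent to $b=0$ for every $u_0$ together with $a\ge 0$; the latter is \eqref{convexity condition of leader}, and the former is the first-order condition I treat next. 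This simultaneously yields both the necessity and the sufficiency claimed in the theorem.

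The heart of the argument is to rewrite the first-order term as $\varepsilon\,\mathbb{E}\int_0^T u_0\,[\,\cdots]\,dt$ with the bracket equal to the left-hand side of \eqref{stationarity condition of leader}, and this is achieved by duality. I would apply It\^o's formula to the three products $\check{y}_0\tilde{x}_0$, $\check{y}^{(N)}\tilde{x}^{(N)}$ and $\check{\psi}_N\tilde{\phi}_N$, integrate over $[0,T]$, and take expectations, using the adjoint dynamics in \eqref{adjoint FBSDE of leader}. The forward/backward roles are interchanged between a state and its adjoint: $\check{y}_0$ is forward (datum at $t=0$) dual to the backward $\check{x}_0$, $\check{y}^{(N)}$ is backward (terminal value $0$) dual to the forward $\check{x}^{(N)}$, while $\check{\psi}_N$ (forward, started at $0$) is the adjoint introduced specifically to absorb the extra backward state $\check{\phi}_N$, which itself enters the dynamics of $\check{x}^{(N)}$. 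The adjoint drifts are engineered so that, upon summing the three It\^o identities, all coupling terms cancel in pairs: the $A_0$-, $C_0$- and $(A-\Pi_NBR^{-1}B)$-diagonal terms cancel within each identity, the two $F\check{y}^{(N)}\tilde{x}_0$ terms cancel between the first and second identities, the two $(\Pi_NF-(1-\frac{\Gamma}{N})Q\Gamma_1)\check{\psi}_N\tilde{x}_0$ terms cancel between the first and third, and the two $BR^{-1}B\check{y}^{(N)}\tilde{\phi}_N$ terms cancel between the second and third. What survives is exactly the combination $B_0\check{y}_0+(G-BR^{-1}L)\check{y}^{(N)}-\Pi_N(G-BR^{-1}L)\check{\psi}_N$ paired against $u_0$, together with the running cross term $Q_0(\check{x}_0-\Gamma_0\check{x}^{(N)}-\eta_0)(\tilde{x}_0-\Gamma_0\tilde{x}^{(N)})$ and a boundary contribution dictated by $\tilde{x}_0(T)=0$, $\check{y}^{(N)}(T)=0$, $\tilde{x}^{(N)}(0)=0$, $\check{\psi}_N(0)=0$ and $\tilde{\phi}_N(T)=0$. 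Substituting this identity back collapses the first-order term to $\varepsilon\,\mathbb{E}\int_0^T u_0\,[B_0\check{y}_0+R_0\check{u}_0+(G-BR^{-1}L)\check{y}^{(N)}-\Pi_N(G-BR^{-1}L)\check{\psi}_N]\,dt$, whose vanishing for all $u_0\in\mathscr{U}_0[0,T]$ is \eqref{stationarity condition of leader}.

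The hard part will be the delicate bookkeeping of the boundary terms generated by the leader's \emph{backward} state $\check{x}_0$; concretely, the initial-value penalty $H_0\check{x}_0(0)\tilde{x}_0(0)$ in the first-order term must be matched against the boundary value $\check{y}_0(0)$ produced by the It\^o identity for $\check{y}_0\tilde{x}_0$, and getting this contribution to combine consistently with the forward orientation of $\check{y}_0$ is exactly what pins down the initial datum $\check{y}_0(0)$ appearing in \eqref{adjoint FBSDE of leader}. Care with its sign, relative to the backward datum of $\check{x}_0$ and the $-C_0\check{y}_0\,dW_0$ correction in the cross-variation, is the one place where the argument is easy to get wrong. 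The remaining ingredients are routine: the variational FBSDE \eqref{random FBSDE of leader} is well posed by linearity of its coefficients, and the It\^o computations are legitimate on the given adapted solution of \eqref{adjoint FBSDE of leader}, so that once the three duality identities are summed the equivalence follows immediately from the quadratic structure established above.
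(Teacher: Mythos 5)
Your proposal is correct and follows essentially the same route as the paper: the paper likewise expands the quadratic cost under the perturbation $\check{u}_0+\varepsilon u_0$, identifies the $\varepsilon^2$ term with the convexity condition, and applies It\^o's formula to $\tilde{x}_0\check{y}_0+\tilde{x}^{(N)}\check{y}^{(N)}+\tilde{\phi}_N\check{\psi}_N$ to reduce the first-order term to $\varepsilon\,\mathbb{E}\int_0^T u_0\big[B_0\check{y}_0+R_0\check{u}_0+(G-BR^{-1}L)\check{y}^{(N)}-\Pi_N(G-BR^{-1}L)\check{\psi}_N\big]dt$, with the boundary bookkeeping (only $-H_0\check{x}_0(0)\tilde{x}_0(0)$ survives) handled exactly as you describe.
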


    \begin{proof}
    For given $\xi_0 \in L^2_{\mathcal{F}_T^0}(\Omega;\mathbb{R})$ and $\check{u}_0(\cdot) \in \mathscr{U}_0[0,T]$, let ($\check{x}_0(\cdot)$, $\check{z}_0(\cdot)$, $\check{x}^{(N)}(\cdot)$, $\check{\phi}_N(\cdot)$, $\check{V}_N(\cdot)$, $\check{y}_0(\cdot)$, $\check{y}^{(N)}(\cdot)$, $\check{\beta}^{(N)}(\cdot)$, $\check{\psi}_N(\cdot)$) be an adapted solution to FBSDE (\ref{adjoint FBSDE of leader}).
    For any $u_0(\cdot) \in \mathscr{U}_0[0,T]$ and $\varepsilon \in \mathbb{R}$, let ($x_0^{\varepsilon}(\cdot)$, $z_0^{\varepsilon}(\cdot)$, $x^{\varepsilon(N)}(\cdot)$, $\phi_N^{\varepsilon}(\cdot)$, $V_N^{\varepsilon}(\cdot)$) be the solution to the following perturbed state equation of the leader:
    \begin{equation}\label{perturbed state of leader}
    \left\{
    \begin{aligned}
                dx_0^{\varepsilon} = &\ \big[A_0x_0^{\varepsilon} + B_0(\check{u}_0 + \varepsilon u_0) + C_0x_0^{\varepsilon} + f_0\big]dt + z_0^{\varepsilon}dW_0, \quad x_0^{\varepsilon}(T) = \xi_0, \\
                dx^{\varepsilon(N)} = &\ \big[(A - BR^{-1}B\Pi_N)x^{\varepsilon(N)} - BR^{-1}B\phi^{\varepsilon} + Fx_0^{\varepsilon} + (G - BR^{-1}L)(\check{u}_0 + \varepsilon u_0)  \\
                &\quad + f\big]dt+ DdW^{(N)}, \quad x^{\varepsilon(N)}(0) = \xi^{(N)}, \\
                d\phi_N^{\varepsilon} = & -\bigg[(A - \Pi_NBR^{-1}B)\phi_N^{\varepsilon} + \Pi_N(G - BR^{-1}L)(\check{u}_0 + \varepsilon u_0) + \Pi_NFx_0^{\varepsilon} \\
                &+ \Pi_Nf - \left(1 - \frac{\Gamma}{N}\right)Q(\Gamma_1x_0^{\varepsilon} + \eta)\bigg]dt + V_N^{\varepsilon}dW_0, \quad \phi_N^{\varepsilon}(T) = 0. \\
    \end{aligned}
    \right.
    \end{equation}
    Then, denoting by $(\tilde{x}_0(\cdot), \tilde{z}_0(\cdot), \tilde{x}^{(N)}(\cdot), \tilde{\phi}_N(\cdot), \tilde{V}_N(\cdot))$ the solution to (\ref{random FBSDE of leader}), we have $x_0^{\varepsilon}(\cdot) = \check{x}_0(\cdot) + \varepsilon \tilde{x}_0(\cdot)$, $z_0^{\varepsilon}(\cdot) = \check{z}_0(\cdot) + \varepsilon \tilde{z}_0(\cdot)$, $x^{\varepsilon(N)}(\cdot) = \check{x}^{(N)}(\cdot) + \varepsilon \tilde{x}^{(N)}(\cdot)$, $\phi_N^{\varepsilon}(\cdot) = \check{\phi}_N(\cdot) + \varepsilon \tilde{\phi}_N(\cdot)$, $V_N^{\varepsilon}(\cdot) = \check{V}_N(\cdot) + \varepsilon \tilde{V}_N(\cdot)$, and
    \begin{equation*}
    \begin{aligned}
            &\check{J}^N_0(\check{u}_0(\cdot) + \varepsilon u_0(\cdot)) - \check{J}^N_0(\check{u}_0(\cdot))\\
            & =\frac{1}{2} \mathbb{E} \left\{\int_0^T \left[||x_0^{\varepsilon} - \Gamma_0\check{x}^{\varepsilon(N)} - \eta_0||^2_{Q_0} + ||\check{u}_0 + \varepsilon u_0||^2_{R_0}\right]dt + ||x_0^{\varepsilon}(0)||^2_{H_0}\right\} \\
            &\quad - \frac{1}{2} \mathbb{E} \left\{\int_0^T \left[||\check{x}_0 - \Gamma_0\check{x}^{(N)} - \eta_0||^2_{Q_0} + ||\check{u}_0||^2_{R_0}\right]dt + ||\check{x}_0(0)||^2_{H_0}\right\} \\
            & =\frac{1}{2} \mathbb{E} \left\{\int_0^T \left[||(\check{x}_0 + \varepsilon \tilde{x}_0) - \Gamma_0(\check{x}^{(N)} + \varepsilon \tilde{x}^{(N)}) - \eta_0||^2_{Q_0} + ||\check{u}_0 + \varepsilon u_0||^2_{R_0}\right]dt\right. \\
            &\qquad + ||\check{x}_0(0) + \varepsilon \tilde{x}_0(0)||^2_{H_0}\bigg\}- \frac{1}{2} \mathbb{E} \left\{\int_0^T \left[||\check{x}_0 - \Gamma_0\check{x}^{(N)} - \eta_0||^2_{Q_0} + ||\check{u}_0||^2_{R_0}\right]dt + ||\check{x}_0(0)||^2_{H_0}\right\} \\
            &= \frac{1}{2} \varepsilon^2 \mathbb{E} \left\{\int_0^T \left[Q_0(\tilde{x}_0 - \Gamma_0\tilde{x}^{(N)})^2 + R_0u_0^2\right]dt + H_0 \tilde{x}_0^2(0)\right\} \\
            &\quad + \varepsilon \mathbb{E} \left\{\int_0^T \left[Q_0(\tilde{x}_0 - \Gamma_0\tilde{x}^{(N)})(\check{x}_0 - \Gamma_0 \check{x}^{(N)} - \eta_0) + R_0\check{u}_0u_0\right]dt + H_0 \check{x}_0(0) \tilde{x}_0(0)\right\}.
    \end{aligned}\
    \end{equation*}
    On the other hand, applying It\^{o}'s formula to $\tilde{x}_0(\cdot)\check{y}_0(\cdot) + \tilde{x}^{(N)}(\cdot)\check{y}^{(N)}(\cdot) + \tilde{\phi}_N(\cdot)\check{\psi}_N(\cdot)$, integrating from 0 to $T$ and taking expectation, we obtain
    \begin{equation*}
    \begin{aligned}
            &\ - \mathbb{E} \big[H_0 \check{x}_0(0) \tilde{x}_0(0)\big] \notag \\
            &= \mathbb{E} \int_0^T \bigg[Q_0(\tilde{x}_0 - \Gamma_0\tilde{x}^{(N)})\left(\check{x}_0 - \Gamma_0 \check{x}^{(N)} - \eta_0\right) - B_0\check{y}_0u_0 - (G - BR^{-1}L)\check{y}^{(N)}u_0 \notag \\
            &\qquad + \Pi_N(G - BR^{-1}L)\check{\psi}_Nu_0\bigg]dt.
    \end{aligned}
    \end{equation*}
    Hence,
        $$\check{J}^N_0(\check{u}_0(\cdot)) \le \check{J}^N_0(\check{u}_0(\cdot) + \varepsilon u_0(\cdot))$$
    if and only if (\ref{stationarity condition of leader}) and (\ref{convexity condition of leader}) hold. The proof is complete.
    \end{proof}

    Based on Assumption (A3), we can compute out the optimal control of the leader is
    \begin{equation}\label{optimal strategy of leader}
        \check{u}_0 = -R_0^{-1}\big[B_0\check{y}_0 + (G - BR^{-1}L)\check{y}^{(N)} - \Pi_N(G - BR^{-1}L)\check{\psi}_N\big],\quad a.e.,\ a.s..
    \end{equation}
    So the related Hamiltonian system can be represented by
    \begin{equation}\label{Hamiltonian system of leader}
        \left\{
        \begin{aligned}
            d\check{x}_0 &= \Big\{A_0\check{x}_0 - B_0R_0^{-1}\big[B_0\check{y}_0 + (G - BR^{-1}L)\check{y}^{(N)} - \Pi_N(G - BR^{-1}L)\check{\psi}_N\big] \\
            &\quad + C_0\check{z}_0 + f_0\Big\}dt + \check{z}_0dW_0, \quad x_0(T) = \xi_0, \\
            d\check{x}^{(N)} &= \Big\{(A - BR^{-1}B\Pi_N)\check{x}^{(N)} - BR^{-1}B\check{\phi}_N + F\check{x}_0 - (G - BR^{-1}L)R_0^{-1}\big[B_0\check{y}_0 \\
            &\quad + (G - BR^{-1}L)\check{y}^{(N)} - \Pi_N(G - BR^{-1}L)\check{\psi}_N\big] + f\Big\}dt + DdW^{(N)}, \quad \check{x}^{(N)}(0) = \xi^{(N)}, \\
            d\check{\phi}_N &= -\Big\{(A - \Pi_NBR^{-1}B)\check{\phi}_N + \Pi_N(G - BR^{-1}L)R_0^{-1}\big[B_0\check{y}_0 + (G - BR^{-1}L)\check{y}^{(N)}  \\
            &\quad - \Pi_N(G- BR^{-1}L)\check{\psi}_N\big] + \Pi_NF\check{x}_0 + \Pi_Nf - (1 - \frac{\Gamma}{N})Q(\Gamma_1\check{x}_0 + \eta)\Big\}dt\\
            &\quad + \check{V}_NdW_0, \quad \check{\phi}_N(T) = 0, \\
            d\check{y}_0 &= -\bigg[Q_0\big(\check{x}_0 - \Gamma_0 \check{x}^{(N)} - \eta_0\big) + A_0\check{y}_0 + F\check{y}^{(N)} - \left(\Pi_NF - \left(1 - \frac{\Gamma}{N}\right)Q\Gamma_1\right)\check{\psi}_N\bigg]dt \\
            &\quad - C_0\check{y}_0 dW_0, \quad \check{y}_0(0) = H_0\check{x}_0(0), \\
            d\check{y}^{(N)} &= -\big[(A - \Pi_NBR^{-1}B)\check{y}^{(N)} - Q_0\Gamma_0\big(\check{x}_0 - \Gamma_0\check{x}^{(N)} - \eta_0\big)\big]dt \\
            &\quad + \check{\beta}^{(N)} dW^{(N)}, \quad \check{y}^{(N)}(T) = 0, \\
            d\check{\psi}_N &= [BR^{-1}B\check{y}^{(N)} + (A - \Pi_NBR^{-1}B)\check{\psi}_N]dt, \quad \check{\psi}_N(0) = 0.
        \end{aligned}
        \right.
    \end{equation}

    \section{Decentralized strategies and $(\epsilon_1, \epsilon_2)$-Stackelberg equilibria}

    Let $N \to \infty$ in (\ref{Riccati-1 of followers}) and (\ref{Riccati-2 of followers}), then $P_N(\cdot) \to \bar{P}(\cdot)$, $K_N(\cdot) \to \bar{K}(\cdot)$, where $\bar{P}(\cdot)$, $\bar{K}(\cdot)$ satisfy
    \begin{equation}\label{Riccati-1 of followers--}
        \dot{\bar{P}} + A\bar{P} + \bar{P}A - \bar{P}BR^{-1}B\bar{P} + Q = 0,\quad \bar{P}(T) = H,
    \end{equation}
    \begin{equation}\label{Riccati-2 of followers--}
        \dot{\bar{K}} + A\bar{K} + \bar{K}A - \bar{P}BR^{-1}B\bar{K} - \bar{K}BR^{-1}B(\bar{P} + \bar{K}) - Q\Gamma = 0,\quad \bar{K}(T) = 0.
    \end{equation}
    Let $\bar{\Pi}(\cdot) := \bar{P}(\cdot) + \bar{K}(\cdot)$, then $\bar{\Pi}(\cdot)$ satisfies
    \begin{equation}\label{Riccati-1+2 of followers--}
        \dot{\bar{\Pi}} + A\bar{\Pi} + \bar{\Pi}A - \bar{\Pi}BR^{-1}B\bar{\Pi} + Q - Q\Gamma = 0,\quad \bar{\Pi}(T) = H,
    \end{equation}
    From Assumption (A3), it follows that (\ref{Riccati-1 of followers--}) and (\ref{Riccati-2 of followers--}) admits a unique solution, respectively.

    Inspired by (\ref{Hamiltonian system of leader}), we consider
    \begin{equation}\label{Hamiltonian system of leader--}
        \left\{
        \begin{aligned}
            d\bar{x}_0 &= \Big\{A_0\bar{x}_0 - B_0R_0^{-1}\big[B_0\bar{y}_0 + (G - BR^{-1}L)\bar{y} - \bar{\Pi}(G - BR^{-1}L)\bar{\psi}\big] \\
            &\quad + C_0\bar{z}_0 + f_0\Big\}dt + \bar{z}_0dW_0, \quad x_0(T) = \xi_0, \\
            d\bar{x} &= \Big\{(A - BR^{-1}B\bar{\Pi})\bar{x} - BR^{-1}B\bar{\phi} + F\bar{x}_0 - (G - BR^{-1}L)R_0^{-1}\big[B_0\bar{y}_0 \\
            &\quad + (G - BR^{-1}L)\bar{y} - \bar{\Pi}(G - BR^{-1}L)\bar{\psi}\big] + f\Big\}dt, \quad \bar{x}(0) = \bar{\xi}, \\
            d\bar{\phi} &= -\Big\{(A - \bar{\Pi}BR^{-1}B)\bar{\phi} + \bar{\Pi}(G - BR^{-1}L)R_0^{-1}\big[B_0\bar{y}_0 + (G - BR^{-1}L)\bar{y}  \\
            &\quad - \bar{\Pi}(G- BR^{-1}L)\bar{\psi}\big] + \bar{\Pi}F\bar{x}_0 + \bar{\Pi}f - Q(\Gamma_1\bar{x}_0 + \eta)\Big\}dt + \bar{V}dW_0, \quad \bar{\phi}(T) = 0, \\
            d\bar{y}_0 &= -\big[Q_0(\bar{x}_0 - \Gamma_0 \bar{x} - \eta_0) + A_0\bar{y}_0 + F\bar{y} - (\bar{\Pi}F - Q\Gamma_1)\bar{\psi}\big]dt \\
            &\quad - C_0\bar{y}_0 dW_0, \quad \bar{y}_0(0) = H_0\bar{x}_0(0), \\
            d\bar{y} &= -\big[(A - \bar{\Pi}BR^{-1}B)\bar{y} - Q_0\Gamma_0(\bar{x}_0 - \Gamma_0\bar{x} - \eta_0)\big]dt, \quad \bar{y}(T) = 0, \\
            d\bar{\psi} &= \big[BR^{-1}B\bar{y} + (A - \bar{\Pi}BR^{-1}B)\bar{\psi}\big]dt, \quad \bar{\psi}(0) = 0.
        \end{aligned}
        \right.
    \end{equation}

    Denote $X := [\bar{x}_0, \bar{y}, \bar{\phi}]^\top$, $Y := [\bar{y}_0, \bar{x}, \bar{\psi}]^\top$, $Z := [\bar{z}_0, 0, \bar{V}]^\top$ and
    \begin{equation*}
     \begin{aligned}
        \mathcal{A}_1 &:=
        \begin{bmatrix}
            A_0 & - B_0R_0^{-1}(G - BR^{-1}L) & 0 \\
            Q_0\Gamma_0 & - A + BR^{-1}B\bar{\Pi} & 0 \\
            - \bar{\Pi}F + Q\Gamma_1 & \bar{\Pi}R_0^{-1}(G - BR^{-1}L)^2 & - A + BR^{-1}B\bar{\Pi} \\
        \end{bmatrix},\\
        \mathcal{B}_1 &:=
        \begin{bmatrix}
            B_0R_0^{-1}B_0 & 0 & - B_0R_0^{-1}\bar{\Pi}(G - BR^{-1}L) \\
            0 & Q_0\Gamma_0^2 & 0 \\
            - B_0R_0^{-1}\bar{\Pi}(G - BR^{-1}L) & 0 & \bar{\Pi}^2R_0^{-1}(G - BR^{-1}L)^2 \\
        \end{bmatrix},\\
        \mathcal{A}_2 &:=
        \begin{bmatrix}
            -Q_0 & -F & 0 \\
            F & -R_0^{-1}(G - BR^{-1}L)^2 & -BR^{-1}B \\
            0 & BR^{-1}B & 0 \\
        \end{bmatrix},\\
        \mathcal{B}_2 &:=
        \begin{bmatrix}
            A_0 & - Q_0\Gamma_0 & - \bar{\Pi}F + Q\Gamma_1 \\
            B_0R_0^{-1}(G - BR^{-1}L) & - A + BR^{-1}B\bar{\Pi} & -\bar{\Pi}R_0^{-1}(G - BR^{-1}L)^2 \\
            0 & 0 & - A + BR^{-1}B\bar{\Pi} \\
        \end{bmatrix},\\
        \mathcal{C} &:=
        \begin{bmatrix}
            C_0 & 0 & 0 \\
            0 & 0 & 0 \\
            0 & 0 & 0 \\
        \end{bmatrix},\quad
        \mathfrak{f}_0 :=
        \begin{bmatrix}
            f_0 \\
            - Q_0\Gamma_0\eta_0 \\
            Q\eta - \bar{\Pi}f \\
        \end{bmatrix},\quad
        \mathfrak{f} :=
        \begin{bmatrix}
            Q_0\eta_0 \\
            f \\
            0 \\
        \end{bmatrix}.
    \end{aligned}
    \end{equation*}
    With the above notions, we can rewrite (\ref{Hamiltonian system of leader--}) as
    \begin{equation}\label{Hamiltonian system of leader---}
        \left\{
        \begin{aligned}
            dX &= \big[\mathcal{A}_1X - \mathcal{B}_1Y + \mathcal{C}Z + \mathfrak{f}_0\big]dt + ZdW_0,\quad X(T) = [\xi_0, 0, 0]^\top, \\
            dY &= \big[\mathcal{A}_2X - \mathcal{B}_2Y + \mathfrak{f}\big]dt - \mathcal{C}YdW_0,\quad Y(0) = [H_0\bar{x}_0(0), \bar{\xi}, 0]^\top.
        \end{aligned}
        \right.
    \end{equation}

    Suppose $(X(\cdot), Y(\cdot), Z(\cdot))$ is an adapted solution to (\ref{Hamiltonian system of leader---}). We assume that $X(\cdot)$ and $Y(\cdot)$ are related by the following affine transformation
    \begin{equation}
        X(\cdot) = \Phi(\cdot) Y(\cdot) + \Psi(\cdot),
    \end{equation}
    where $\Phi(\cdot)$ and $\Psi(\cdot)$ are both differentiable functions, with $\Phi(T) = 0$ and $\Psi(T) = [\xi_0, 0, 0]^\top$.
    Next, by It\^{o}'s formula, we have
    \begin{equation}\label{Ito formula of leader}
        dX = \big[\dot{\Phi}Y + \dot{\Psi}\big]dt + \Phi\big[\mathcal{A}_2(\Phi Y + \Psi) - \mathcal{B}_2Y + \mathfrak{f}\big]dt - \Phi \mathcal{C}Y dW_0.
    \end{equation}
    Now, comparing (\ref{Ito formula of leader}) with the first equation in (\ref{Hamiltonian system of leader---}), it follows that
    \begin{equation}\label{compare 1}
        Z = - \Phi \mathcal{C}Y.
    \end{equation}
    We can rewrite the first equation in (\ref{Hamiltonian system of leader---}) as
    \begin{equation}\label{rewrite Hamiltonian system of leader---}
        dX = \big[\mathcal{A}_1X - \mathcal{B}_1Y - \mathcal{C}\Phi \mathcal{C}Y + \mathfrak{f}_0\big]dt - \Phi \mathcal{C}YdW_0,\quad X(T) = [\xi_0, 0, 0]^\top.
    \end{equation}
    This together with (\ref{Ito formula of leader}) gives
    \begin{equation}\label{Riccati of leader}
        \dot{\Phi} - \Phi \mathcal{B}_2 + \Phi \mathcal{A}_2 \Phi - \mathcal{A}_1 \Phi + \mathcal{B}_1 + \mathcal{C} \Phi \mathcal{C} = 0, \quad \Phi(T) = 0,
    \end{equation}
    \begin{equation}\label{compare 3}
        \dot{\Psi} + \Phi \mathcal{A}_2 \Psi + \Phi \mathfrak{f} - \mathcal{A}_1 \Psi - \mathfrak{f}_0 = 0, \quad \Psi(T) = [\xi_0, 0, 0]^\top.
    \end{equation}

    Note that the Riccati equation (\ref{Riccati of leader}) is nonsymmetric.
    By Theorem 4.1 on page 47 of \cite{Ma-Yong-99} again, if (\ref{Riccati of leader}) admits a solution $\Phi(\cdot)$, then FBSDE (\ref{Hamiltonian system of leader---}) admits a unique adapted solution $(X(\cdot), Y(\cdot), Z(\cdot))$.

    We are going to identify conditions for the existence of a unique solution to (\ref{Riccati of leader}).

    \begin{mypro}\label{prop4.1}
        Let the pair $(\alpha(\cdot), \beta(\cdot))$ be the solution to the following differential equation:
        $$\frac{d}{dt}
        \begin{pmatrix}
            \alpha(t) \\
            \beta(t) \\
        \end{pmatrix} =
        \begin{pmatrix}
            \mathcal{A}_1 & -\mathcal{B}_1 \\
            \mathcal{A}_2 & -\mathcal{B}_2 \\
        \end{pmatrix}
        \begin{pmatrix}
            \alpha(t) \\
            \beta(t) \\
        \end{pmatrix}, \quad
        \begin{pmatrix}
            \alpha(T) \\
            \beta(T) \\
        \end{pmatrix} =
        \begin{pmatrix}
            0_{3 \times 3} \\
            I_3 \\
        \end{pmatrix}.$$
        If $\beta(t)$ is invertible for all $t \in [0,T]$, then the Riccati equation (\ref{Riccati of leader}) has a unique solution $\Phi(t) = \alpha(t)\beta^{-1}(t)$ for all $t \in [0,T]$.
    \end{mypro}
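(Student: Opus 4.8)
The plan is to linearize the nonsymmetric matrix Riccati equation (\ref{Riccati of leader}) through the associated first-order linear system displayed in the statement. That system has continuous (indeed constant) block coefficients, so by standard linear ODE theory its terminal-value problem admits a unique, globally defined, continuously differentiable solution $(\alpha(\cdot),\beta(\cdot))$ on $[0,T]$; no invertibility is required for this step. The sole role of the hypothesis that $\beta(t)$ be invertible for every $t\in[0,T]$ is to make the candidate $\Phi(t):=\alpha(t)\beta^{-1}(t)$ well defined and $C^1$ on the whole interval.

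Next I would verify that this $\Phi$ satisfies the Riccati equation. Differentiating $\Phi=\alpha\beta^{-1}$ and using $\frac{d}{dt}\beta^{-1}=-\beta^{-1}\dot\beta\,\beta^{-1}$ gives $\dot\Phi=\dot\alpha\,\beta^{-1}-\alpha\beta^{-1}\dot\beta\,\beta^{-1}$. Substituting $\dot\alpha=\mathcal{A}_1\alpha-\mathcal{B}_1\beta$ and $\dot\beta=\mathcal{A}_2\alpha-\mathcal{B}_2\beta$ and then replacing every factor $\alpha\beta^{-1}$ by $\Phi$ collapses the right-hand side to $\mathcal{A}_1\Phi-\mathcal{B}_1+\Phi\mathcal{B}_2-\Phi\mathcal{A}_2\Phi$, that is, to the right-hand side of (\ref{Riccati of leader}) except for the two-sided term $-\mathcal{C}\Phi\mathcal{C}$ addressed below. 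The terminal condition is immediate: $\alpha(T)=0_{3\times3}$ and $\beta(T)=I_3$ force $\Phi(T)=0$, matching $\Phi(T)=0$ in (\ref{Riccati of leader}). For uniqueness I would argue on (\ref{Riccati of leader}) directly, its right-hand side being a matrix polynomial in $\Phi$ and hence locally Lipschitz, so that Picard--Lindel\"of applied to the terminal-value problem yields at most one maximal solution; the construction above furnishes one defined on all of $[0,T]$ precisely because $\beta$ stays invertible there, whence $\Phi=\alpha\beta^{-1}$ is \emph{the} solution.

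The step I expect to be the real obstacle is accounting for the sandwiched term $\mathcal{C}\Phi\mathcal{C}$. A block linearization of the stated form can only generate a left-multiplication term $\mathcal{A}_1\Phi$, a right-multiplication term $\Phi\mathcal{B}_2$, a pure quadratic term $\Phi\mathcal{A}_2\Phi$, and a constant term $\mathcal{B}_1$; it does not by itself produce a term of the form $\mathcal{C}\Phi\mathcal{C}$, so reconciling this is the crux of the argument. Here I would exploit the rank-one structure of $\mathcal{C}$: its only nonzero entry is $C_0$ in the $(1,1)$ slot, so $\mathcal{C}\Phi\mathcal{C}$ contributes only the scalar $C_0^2\Phi_{11}$ in the $(1,1)$ position, and I would check whether this single contribution is already carried by the entries of $\mathcal{A}_1,\mathcal{B}_1,\mathcal{A}_2,\mathcal{B}_2$ (so that the stated linear system suffices) or whether the coefficient matrix must be augmented to encode the $\mathcal{C}$-coupling. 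Once that correspondence is settled, everything else reduces to routine linear-ODE bookkeeping together with the invertibility hypothesis on $\beta$.
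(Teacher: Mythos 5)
Your computation is exactly the paper's proof of this proposition: differentiate $\Phi=\alpha\beta^{-1}$ using $\frac{d}{dt}\beta^{-1}=-\beta^{-1}\dot\beta\,\beta^{-1}$, substitute the linear dynamics, and arrive at $\dot\Phi=\mathcal{A}_1\Phi-\mathcal{B}_1-\Phi\mathcal{A}_2\Phi+\Phi\mathcal{B}_2$ with $\Phi(T)=0$. (The paper says nothing about uniqueness; your Picard--Lindel\"of remark for the locally Lipschitz right-hand side is a harmless and in fact welcome addition.) But the step you flag as ``the real obstacle'' is indeed a genuine gap, and the paper does not close it either: its proof stops at the displayed identity and declares the result achieved, even though what has been verified is equation (\ref{Riccati of leader}) \emph{without} the term $\mathcal{C}\Phi\mathcal{C}$ --- that is, the later equation (\ref{Riccati of leader---}) rather than (\ref{Riccati of leader}) itself. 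Your suspicion about the sandwiched term is correct and cannot be argued away. Writing $\mathcal{C}=C_0E_{11}$ with $E_{11}$ the $(1,1)$ matrix unit, one has $\mathcal{C}\Phi\mathcal{C}=C_0^2\Phi_{11}E_{11}$, and this is not expressible as $M\Phi$ or $\Phi M$ for any constant matrix $M$ (requiring $M\Phi=C_0^2\Phi_{11}E_{11}$ for all $\Phi$ forces $M_{11}=C_0^2$ and $M_{12}=M_{13}=0$ from the first column, but then the $(1,2)$ entry of $M\Phi$ is $C_0^2\Phi_{12}\neq 0$ for generic $\Phi$); so it is not already carried by the entries of $\mathcal{A}_1,\mathcal{B}_1,\mathcal{A}_2,\mathcal{B}_2$, and no constant block-Hamiltonian system of the stated form can generate it. Consequently $\alpha\beta^{-1}$ satisfies (\ref{Riccati of leader}) only when $C_0^2\Phi_{11}\equiv 0$, i.e.\ essentially only when $C_0=0$.

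So the honest verdict is that your proposal reproduces the paper's argument and correctly isolates a defect that the paper's own proof passes over in silence. Repairing the statement requires either restricting to $C_0=0$, or replacing the linearization by one adapted to Riccati equations with a two-sided term (for instance, absorbing the scalar coupling $C_0^2\Phi_{11}$ through a fixed-point or perturbation argument around the solution of the $\mathcal{C}$-free equation, or re-deriving the decoupling so that the substitution $Z=-\Phi\mathcal{C}Y$ is built into the linear system from the start). As written, the reduction to the stated linear ODE proves solvability of (\ref{Riccati of leader---}), not of (\ref{Riccati of leader}).
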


    \begin{proof}
        Note that
        $$\frac{d\beta^{-1}(t)}{dt} = -\beta^{-1}(t) \frac{d\beta(t)}{dt} \beta^{-1}(t) = -\beta^{-1}(t) \mathcal{A}_2 \alpha(t) \beta^{-1}(t) + \beta^{-1}(t) \mathcal{B}_2,$$
        which implies
        \begin{align*}
            \frac{d\Phi(t)}{dt} &= \frac{d\alpha(t)}{dt} \beta^{-1}(t) + \alpha(t) \frac{d\beta^{-1}(t)}{dt} \\
            &= \mathcal{A}_1 \alpha(t)\beta^{-1}(t) - \mathcal{B}_1 - \alpha(t)\beta^{-1}(t)\mathcal{A}_2\alpha(t)\beta^{-1}(t) + \alpha(t)\beta^{-1}(t)\mathcal{B}_2 \\
            &= \mathcal{A}_1 \Phi(t) - \mathcal{B}_1 - \Phi(t)\mathcal{A}_2\Phi(t) + \Phi(t)\mathcal{B}_2,
        \end{align*}
        and the proof is achieved.
    \end{proof}

    Motivated by (\ref{optimal strategy of followers}) and (\ref{optimal strategy of leader}), we design the decentralized strategies below:
    \begin{equation}\label{decentralized strategies}
        \left\{
        \begin{aligned}
            \hat{u}_0 &= -R_0^{-1}\big[B_0\bar{y}_0 + (G - BR^{-1}L)\bar{y} - \bar{\Pi}(G - BR^{-1}L)\bar{\psi}\big], \\
            \hat{u}_i &= -R^{-1}\big[B(\bar{P}\hat{x}_i + \bar{K}\bar{x} + \bar{\phi}) + Lu_0\big], \quad i = 1,\cdots,N,
        \end{aligned}
        \right.
    \end{equation}
    where $\bar{y}_0(\cdot), \bar{y}(\cdot), \bar{\psi}(\cdot), \bar{x}(\cdot), \bar{\phi}(\cdot)$ are given by (\ref{Hamiltonian system of leader--}), and $\hat{x}_i(\cdot)$ satisfies
    \begin{align}\label{bar xi}
        d\hat{x}_i &= \big[(A - BR^{-1}B\bar{P})\hat{x}_i - BR^{-1}B(\bar{K}\bar{x} + \bar{\phi}) + Fx_0 + (G - BR^{-1}L)u_0 + f\big]dt \notag\\
        &\quad + DdW_i, \quad \hat{x}_i(0) = \xi_i,\quad i = 1,\cdots,N.
    \end{align}

    Next, we will show that the decentralized strategies (\ref{decentralized strategies}) of the leader and the followers, constitute an approximated $(\epsilon_1, \epsilon_2)$-Stackelberg equilibrium.

    \begin{mythm}\label{thm4.1}
        Assume that Assumptions (A1)-(A3) hold. Then $(\hat{u}_0(\cdot), \hat{u}_1(\cdot), \cdots, \hat{u}_N(\cdot))$ given in (\ref{decentralized strategies}) constitutes an $(\epsilon_1, \epsilon_2)$-Stackelberg equilibrium, where $\epsilon_1 = \epsilon_2 = O(\frac{1}{\sqrt{N}})$.
    \end{mythm}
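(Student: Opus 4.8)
The goal is to show that the decentralized strategies $(\hat u_0,\hat u_1,\dots,\hat u_N)$ in (\ref{decentralized strategies}) form an $(\epsilon_1,\epsilon_2)$-Stackelberg equilibrium with $\epsilon_1=\epsilon_2=O(\tfrac{1}{\sqrt N})$. The overall architecture follows the two-layer structure of Definition \ref{def2.1}: I would prove the $\epsilon_1$-Nash property for the followers (part (i)) and then the $\epsilon_2$-optimality for the leader (part (ii)), and the engine driving both is a set of mean-field convergence estimates quantifying how far the \emph{decentralized} closed-loop quantities deviate from their \emph{centralized} counterparts as $N\to\infty$. The key intermediate objects are the differences between the finite-$N$ Riccati data $(P_N,K_N,\Pi_N)$ and their limits $(\bar P,\bar K,\bar\Pi)$ from (\ref{Riccati-1 of followers--})--(\ref{Riccati-1+2 of followers--}), and between the true mean field $\hat x^{(N)}:=\tfrac1N\sum_i\hat x_i$ and the limiting state $\bar x$ from (\ref{Hamiltonian system of leader--}).

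\textbf{Step 1: perturbation of the Riccati/ODE data.}
First I would show $\|P_N-\bar P\|_\infty,\|K_N-\bar K\|_\infty,\|\Pi_N-\bar\Pi\|_\infty=O(\tfrac1N)$. Since (\ref{Riccati-1 of followers}) and (\ref{Riccati-1 of followers--}) differ only through the coefficient $(1-\tfrac{\Gamma}{N})Q$ versus $Q$, and the solutions are bounded on $[0,T]$, a standard Gronwall argument on the difference ODE yields the $O(\tfrac1N)$ rate; the same applies to $K_N,\Pi_N$ and, carrying the estimate into the coefficients, to the associated BSDE data.

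\textbf{Step 2: law-of-large-numbers estimates for the mean field.}
Under (A1)--(A2) the initial values $\xi_i$ are i.i.d.\ and the noises $W_i$ are independent, so I would establish
\begin{equation*}
    \sup_{0\le t\le T}\mathbb E\big\|\hat x^{(N)}(t)-\bar x(t)\big\|^2=O\!\Big(\tfrac1N\Big),
\end{equation*}
together with analogous bounds for the averaged adjoint processes. The proof combines the Step 1 coefficient estimates, the fact that $\tfrac1N\sum_i DW_i$ has variance $O(\tfrac1N)$, and an $\tfrac1N$-Gronwall estimate on the coupled forward system (\ref{bar xi}) feeding into (\ref{Hamiltonian system of leader--}). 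Crucially, replacing $x^{(N)}$ by $\bar x$ in each player's cost then changes $J^N_i$ and $J^N_0$ by at most $O(\tfrac{1}{\sqrt N})$ by Cauchy--Schwarz.

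\textbf{Step 3: the follower layer ($\epsilon_1$).}
Fixing the leader's strategy and an arbitrary deviation $u_i\in\mathscr U_i[0,T]$ for follower $i$, I would compare $J^N_i(\hat u_i,\hat u_{-i},u_0)$ with the true optimum. By Theorem \ref{thm3.1} the \emph{centralized} optimal response is (\ref{optimal strategy of followers}); the decentralized $\hat u_i$ differs from it only by replacing $(P_N\check x_i+K_N\check x^{(N)}+\check\phi_N)$ with $(\bar P\hat x_i+\bar K\bar x+\bar\phi)$. Using Steps 1--2, a single agent's unilateral deviation perturbs the mean field by $O(\tfrac1N)$, so the optimality gap is $O(\tfrac{1}{\sqrt N})$, giving $\epsilon_1=O(\tfrac{1}{\sqrt N})$.

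\textbf{Step 4: the leader layer ($\epsilon_2$).}
With followers using (\ref{decentralized strategies}), the leader faces Problem (P2). By Theorem \ref{thm3.2} the centralized optimum uses the high-dimensional system (\ref{Hamiltonian system of leader}); the decentralized $\hat u_0$ uses instead the $6$-dimensional limit system (\ref{Hamiltonian system of leader--}), whose solvability is guaranteed by the decoupling in (\ref{Riccati of leader}) and Proposition \ref{prop4.1}. I would bound $\check J^N_0(\hat u_0)-\inf_{u_0}\check J^N_0(u_0)$ by feeding the Step 1--2 estimates through the leader's cost (\ref{cost of leader-}), obtaining $\epsilon_2=O(\tfrac{1}{\sqrt N})$.

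\textbf{The main obstacle.}
The genuinely delicate point is Step 4, not Step 3. For the followers, a single deviation is asymptotically negligible in the mean field, which is the usual MFG mechanism. For the leader, however, $\hat u_0$ is built from the \emph{limiting} Hamiltonian system (\ref{Hamiltonian system of leader--}), whereas the leader's \emph{true} best response is governed by (\ref{Hamiltonian system of leader}), which contains the finite-$N$ adjoint processes $\check y^{(N)},\check\psi_N$ coupling backward and forward through the nonsymmetric Riccati equation (\ref{Riccati of leader}). I expect the hard part to be controlling the propagation of the $O(\tfrac1N)$ Riccati and mean-field errors through this coupled, nonsymmetric, backward-forward structure while simultaneously handling the leader's backward state $x_0$ (whose terminal condition $\xi_0$ is fixed) and the common-noise term $C_0\bar z_0$; establishing well-posedness and uniform-in-$N$ bounds for the adjoint system so that the estimates close into a clean $O(\tfrac{1}{\sqrt N})$ is where the real work lies.
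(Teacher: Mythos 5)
Your proposal is correct and follows essentially the same route as the paper: the same $O(1/N)$ perturbation estimates for the finite-$N$ Riccati data and the mean field $\hat{x}^{(N)}-\bar{x}$, the same two-layer verification (a completion-of-squares/unilateral-deviation argument for the followers, and a bridge through the centralized optimum $\check{u}_0$ of Problem {\bf (P2)} for the leader), and you correctly single out as the hard step exactly what the paper does via the decoupling $\tilde{X}=\Theta\tilde{Y}$ of the difference FBSDE (\ref{Hamiltonian system of leader----}), namely propagating the $O(1/N)$ coefficient errors through the nonsymmetric backward-forward structure. The only point to tighten when writing it out is the follower layer: the $\epsilon_1$-Nash property must be verified against an arbitrary deviation $u_i$ with the opponents frozen at $\hat{u}_{-i}$ (as in the paper's decomposition $J_i^N=J_i^N(\hat{u}_i,\cdot)+\tilde{J}_i^N+\mathcal{I}_i^N$ with $\tilde{J}_i^N\ge 0$), not merely by comparing $\hat{u}_i$ with the centralized optimum $\check{u}_i$ of Theorem \ref{thm3.1}.
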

    \begin{proof}{\it The followers' problem.}

        By (\ref{bar xi}) and the second equation of (\ref{Hamiltonian system of leader--}), it can be verified that
        \begin{equation}\label{estimate hat x_bar x}
            \mathbb{E} \int_0^T ||\hat{x}^{(N)} - \bar{x}||^2dt = O(\frac{1}{N}).
        \end{equation}
        For $i = 1, \cdots,N$, denote $\tilde{u}_i(\cdot) := u_i(\cdot) -\hat{u}_i(\cdot)$ and $\tilde{x}_i(\cdot) := x_i(\cdot) -\hat{x}_i(\cdot)$.
        Then $\tilde{x}_i(\cdot)$ satisfies
        \begin{equation*}
            d\tilde{x}_i = (A\tilde{x}_i + B\tilde{u}_i)dt, \quad \tilde{x}_i(0) = 0,\quad i = 1, \cdots,N.
        \end{equation*}
        Thus,
        \begin{equation*}
            \sum_{i=1}^{N} \mathbb{E} \int_0^T \left(||\tilde{x}_i(t)||^2 + ||\tilde{u}_i(t)||^2\right)dt < \infty.
        \end{equation*}
        From (\ref{cost of followers}), we have
        \begin{equation}
            J_i^N(u_i(\cdot), \hat{u}_{-i}(\cdot), u_0(\cdot)) = J_i^N(\hat{u}_i(\cdot), \hat{u}_{-i}(\cdot), u_0(\cdot)) + \tilde{J}_i^N(\tilde{u}_i(\cdot), \hat{u}_{-i}(\cdot), u_0(\cdot)) + \mathcal{I}_i^N,
        \end{equation}
        where
        \begin{equation*}
            \tilde{J}_i^N(\tilde{u}_i(\cdot), \hat{u}_{-i}(\cdot), u_0(\cdot)) := \frac{1}{2} \mathbb{E} \left\{\int_0^T \left[||\left(1 - \frac{\Gamma}{N}\right)\tilde{x}_i||^2_Q + ||\tilde{u}_i||^2_R\right]dt + ||\tilde{x}_i(T)||^2_H\right\},
        \end{equation*}
        and
        \begin{equation*}
            \mathcal{I}_i^N := \mathbb{E} \left\{\int_0^T \bigg[Q\left(1 - \frac{\Gamma}{N}\right)\tilde{x}_i\left(\hat{x}_i - \Gamma\hat{x}^{(N)} - \Gamma_1x_0 - \eta\right) + R\tilde{u}_i\hat{u}_i + \tilde{u}_iLu_0\bigg]dt + H\tilde{x}_i(T)\hat{x}_i(T)\right\}.
        \end{equation*}
        Applying It\^{o}'s formula to $\tilde{x}_i(\cdot)\big(\bar{P}(\cdot)\hat{x}_i(\cdot) + \bar{K}(\cdot)\bar{x}(\cdot) + \bar{\phi}(\cdot)\big)$, integrating from 0 to $T$, and taking expectation, we have
        \begin{equation*}
        \begin{aligned}
            &\quad \mathbb{E} \big[H\tilde{x}_i(T)\hat{x}_i(T)\big] \notag \\
            &= \mathbb{E} \big[\tilde{x}_i(T)\big(\bar{P}(T)\hat{x}_i(T) + \bar{K}(T)\bar{x}(T) + \bar{\phi}(T)\big) - \tilde{x}_i(0)\big(\bar{P}(0)\hat{x}_i(0) + \bar{K}(0)\bar{x}(0) + \bar{\phi}(0)\big)\big] \\
            &= \mathbb{E} \int_0^T \bigg\{(A\tilde{x}_i + B\tilde{u}_i)(\bar{P}\hat{x}_i + \bar{K}\bar{x} + \bar{\phi}) +\tilde{x}_i\Big[\dot{\bar{P}}\hat{x}_i + \bar{P}[(A - BR^{-1}B\bar{P})\hat{x}_i \\
            &\qquad - BR^{-1}B(\bar{K}\bar{x} + \bar{\phi}) + Fx_0 + (G - BR^{-1}L)u_0 + f\Big] +\dot{\bar{K}}\hat{x}\\
            &\qquad + \bar{K}\Big[(A - BR^{-1}B\bar{\Pi})\bar{x} - BR^{-1}B\bar{\phi} + F\bar{x}_0 + (G - BR^{-1}L)u_0 + f\Big] \\
            &\qquad -\Big[(A - \bar{\Pi}BR^{-1}B)\bar{\phi} - (G - BR^{-1}L)u_0 + \bar{\Pi}F\bar{x}_0 + \bar{\Pi}f - Q(\Gamma_1\bar{x}_0 + \eta)\Big]\bigg\}dt \\
            &= \mathbb{E} \int_0^T \bigg\{\tilde{x}_i\hat{x}_i\big[A\bar{P} + \dot{\bar{P}} + \bar{P}(A - BR^{-1}B\bar{P})\big] + \tilde{x}_i\bar{x}\big[A\bar{K} - \bar{P}BR^{-1}B\bar{K}\\
            &\qquad + \dot{\bar{K}} + \bar{K}(A - BR^{-1}B\bar{\Pi})\big] - \tilde{x}_iQ(\Gamma_1\bar{x}_0 + \eta) + B\tilde{u}_i(\bar{P}\hat{x}_i + \bar{K}\bar{x} + \bar{\phi})\bigg\}dt \\
            &= \mathbb{E} \int_0^T \Big[-\tilde{x}_iQ\big(\hat{x}_i - \Gamma \bar{x} - \Gamma_1\bar{x}_0 - \eta\big) - \tilde{u}_iR\hat{u}_i - \tilde{u}_iLu_0\Big]dt.
        \end{aligned}
        \end{equation*}
        Then,
        \begin{equation*}
        \begin{aligned}
            &\mathcal{I}_i^N = \mathbb{E} \left\{\int_0^T \left[Q\left(1 - \frac{\Gamma}{N}\right)\tilde{x}_i\left(\hat{x}_i - \Gamma\hat{x}^{(N)} - \Gamma_1x_0 - \eta\right) + R\tilde{u}_i\hat{u}_i + \tilde{u}_iLu_0\right]dt + H\tilde{x}_i(T)\hat{x}_i(T)\right\} \\
            &= \mathbb{E} \bigg\{\int_0^T \bigg[Q\left(1 - \frac{\Gamma}{N}\right)\tilde{x}_i\big(\hat{x}_i - \Gamma\bar{x} - \Gamma_1x_0 - \eta\big) + Q\left(1 - \frac{\Gamma}{N}\right)\tilde{x}_i\Gamma(\bar{x} - \hat{x}^{(N)})  \\
            &\qquad + R\tilde{u}_i\hat{u}_i + \tilde{u}_iLu_0\bigg]dt+ H\tilde{x}_i(T)\hat{x}_i(T)\bigg\}  \\
            &= -\left(1 - \frac{\Gamma}{N}\right)\mathbb{E} \left\{\int_0^T \left[\tilde{u}_iR\hat{u}_i + \tilde{u}_iLu_0\right]dt + H\tilde{x}_i(T)\hat{x}_i(T)\right\}  \\
            &\quad + \mathbb{E} \left\{\int_0^T \left[Q\left(1 - \frac{\Gamma}{N}\right)\tilde{x}_i\Gamma(\bar{x} - \hat{x}^{(N)}) + R\tilde{u}_i\hat{u}_i + \tilde{u}_iLu_0\right]dt + H\tilde{x}_i(T)\hat{x}_i(T)\right\}  \\
            &=\mathbb{E} \left\{\int_0^T \left[\frac{\Gamma}{N}\tilde{u}_iR\hat{u}_i + \frac{\Gamma}{N}\tilde{u}_iLu_0 + Q\left(1 - \frac{\Gamma}{N}\right)\tilde{x}_i\Gamma(\bar{x} - \hat{x}^{(N)})\right]dt + \frac{\Gamma}{N}H\tilde{x}_i(T)\hat{x}_i(T)\right\}  =O(\frac{1}{\sqrt{N}}).
        \end{aligned}
        \end{equation*}
        Thereby,
        $$J_i^N(\hat{u}_i(\cdot), \hat{u}_{-i}(\cdot), u_0(\cdot)) \le J_i^N(u_i(\cdot), \hat{u}_{-i}(\cdot), u_0(\cdot)) + \epsilon_1.$$
        Thus, $(\hat{u}_1(\cdot),\cdots,\hat{u}_N(\cdot))$ is an $\epsilon_1$-Nash equilibrium, where $\epsilon_1 = O(\frac{1}{\sqrt{N}})$.

        {\it Problem of the leader.}

        Let $\check{X} := [\check{x}_0, \check{y}^{(N)}, \check{\phi}]^\top$, $\check{Y} := [\check{y}_0, \check{x}^{(N)}, \check{\psi}]^\top$, $\check{Z} := [\check{z}_0, 0, \check{V}_N]^\top$, $\check{\mathcal{D}} := [-C_0\check{y}_0, 0, 0]^\top$, $\check{Z}^{(N)} := [0, \check{\beta}^{(N)}, 0]^\top$, $\check{D}^{(N)} := [0, D, 0]^\top$ and
        \begin{equation*}
        \begin{aligned}
            \check{\mathcal{A}}_1 &:=
            \begin{bmatrix}
                A_0 & - B_0R_0^{-1}(G - BR^{-1}L) & 0 \\
                Q_0\Gamma_0 & - A + BR^{-1}B\Pi_N & 0 \\
                - \Pi_NF + (1 - \frac{\Gamma}{N})Q\Gamma_1 & \Pi_NR_0^{-1}(G - BR^{-1}L)^2 & - A + BR^{-1}B\Pi_N \\
            \end{bmatrix},\\
            \check{\mathcal{B}}_1 &:=
            \begin{bmatrix}
                B_0R_0^{-1}B_0 & 0 & - B_0R_0^{-1}\Pi_N(G - BR^{-1}L) \\
                0 & Q_0\Gamma_0^2 & 0 \\
                - B_0R_0^{-1}\Pi_N(G - BR^{-1}L) & 0 & \Pi_N^2R_0^{-1}(G - BR^{-1}L)^2 \\
            \end{bmatrix},\\
            \check{\mathcal{A}}_2 &:=
            \begin{bmatrix}
                -Q_0 & -F & 0 \\
                F & -R_0^{-1}(G - BR^{-1}L)^2 & -BR^{-1}B \\
                0 & BR^{-1}B & 0 \\
            \end{bmatrix},\\
            \check{\mathcal{B}}_2 &:=
            \begin{bmatrix}
                A_0 & - Q_0\Gamma_0 & - \Pi_NF + (1 - \frac{\Gamma}{N})Q\Gamma_1 \\
                B_0R_0^{-1}(G - BR^{-1}L) & - A + BR^{-1}B\Pi_N & -\Pi_NR_0^{-1}(G - BR^{-1}L)^2 \\
                0 & 0 & - A + BR^{-1}B\Pi_N \\
            \end{bmatrix},\\
            \check{\mathcal{C}} &:=
            \begin{bmatrix}
                C_0 & 0 & 0 \\
                0 & 0 & 0 \\
                0 & 0 & 0 \\
            \end{bmatrix},\quad
            \check{\mathfrak{f}}_0 :=
            \begin{bmatrix}
                f_0 \\
                - Q_0\Gamma_0\eta_0 \\
                (1 - \frac{\Gamma}{N})Q\eta - \Pi_Nf \\
            \end{bmatrix},\quad
            \check{\mathfrak{f}} :=
            \begin{bmatrix}
                Q_0\eta_0 \\
                f \\
                0 \\
            \end{bmatrix}.
        \end{aligned}
        \end{equation*}
        With the above notions, we can rewrite (\ref{Hamiltonian system of leader}) as
        \begin{equation}\label{Hamiltonian system of leader-}
            \left\{
            \begin{aligned}
                d\check{X} &= \big[\check{\mathcal{A}}_1\check{X} - \check{\mathcal{B}}_1\check{Y} + \check{\mathcal{C}}\check{Z} + \check{\mathfrak{f}}_0\big]dt + \check{Z}dW_0 + \check{Z}^{(N)}dW^{(N)}, \quad \check{X}(T) = [\xi_0, 0, 0]^\top, \\
                d\check{Y} &= \big[\check{\mathcal{A}}_2\check{X} - \check{\mathcal{B}}_2\check{Y} + \check{\mathfrak{f}}\big]dt + \check{\mathcal{D}}dW_0 + \check{\mathcal{D}}^{(N)}dW^{(N)}, \quad \check{Y}(0) = [H_0\check{x}_0(0), \xi^{(N)}, 0]^\top.
            \end{aligned}
            \right.
        \end{equation}

        Denote $\tilde{X} := \check{X} - X$, $\tilde{Y} := \check{Y} - Y$, $\tilde{Z} := \check{Z} - Z$, $\tilde{\mathcal{D}} := \check{\mathcal{D}} - \mathcal{D}$, and $\tilde{\mathcal{A}}_1 := \check{\mathcal{A}}_1 - \mathcal{A}_1$, $\tilde{\mathcal{B}}_1 := \check{\mathcal{B}}_1 - \mathcal{B}_1$, $\tilde{\mathcal{B}}_2 := \check{\mathcal{B}}_2 - \mathcal{B}_2$, $\tilde{\mathfrak{f}}_0 := \check{\mathfrak{f}}_0 - \mathfrak{f}_0$, we have
        \begin{equation}\label{Hamiltonian system of leader----}
            \left\{
            \begin{aligned}
                d\tilde{X} &= \big[\mathcal{A}_1\tilde{X} + \tilde{\mathcal{A}}_1\check{X} - \mathcal{B}_1\tilde{Y} - \tilde{B}_1\check{Y} + \mathcal{C}\tilde{Z} + \tilde{\mathfrak{f}}_0\big]dt + \tilde{Z}dW_0 + \check{Z}^{(N)}dW^{(N)}, \\
                d\tilde{Y} &= \big[\mathcal{A}_2\tilde{X} - \mathcal{B}_2\tilde{Y} - \tilde{\mathcal{B}}_2\check{Y}\big]dt + \tilde{\mathcal{D}}dW_0 + \check{\mathcal{D}}^{(N)}dW^{(N)}, \\
                \quad \tilde{X}(T) &= [\xi_0, 0, 0]^\top,\quad \tilde{Y}(0) = \big[H_0(\check{x}_0(0) - \bar{x}_0(0)), \xi^{(N)} - \bar{\xi}, 0\big]^\top.
            \end{aligned}
            \right.
        \end{equation}

        We denote $\varpi := \tilde{\mathcal{A}}_1\check{X} - \tilde{B}_1\check{Y} + \tilde{\mathfrak{f}}_0$, $\rho := - \tilde{\mathcal{B}}_2\check{Y}$. By the continuous dependence of the solution on the parameter in Theorem 4 of \cite{Huang-Zhou-20}, we have $\sup_{0 \le t \le T}||\varpi||^2 = O(\frac{1}{N})$, $\sup_{0 \le t \le T}||\rho||^2 = O(\frac{1}{N})$.
        Then we can rewrite (\ref{Hamiltonian system of leader----}) as
        \begin{equation}\label{Hamiltonian system of leader--------}
            \left\{
            \begin{aligned}
                d\tilde{X} &= \big[\mathcal{A}_1\tilde{X} - \mathcal{B}_1\tilde{Y} + \mathcal{C}\tilde{Z} + \varpi\big]dt + \tilde{Z}dW_0 + \check{Z}^{(N)}dW^{(N)},\\
                d\tilde{Y} &= \big[\mathcal{A}_2\tilde{X} - \mathcal{B}_2\tilde{Y} + \rho\big]dt + \tilde{\mathcal{D}}dW_0 + \check{\mathcal{D}}^{(N)}dW^{(N)},\\
                \tilde{X}(T) &= [\xi_0, 0, 0]^\top,\quad \tilde{Y}(0) = \big[H_0(\check{x}_0(0) - \bar{x}_0(0)), \xi^{(N)} - \bar{\xi}, 0\big]^\top.
            \end{aligned}
            \right.
        \end{equation}

        We assume that $\tilde{X}(\cdot) = \Theta(\cdot)\tilde{Y}(\cdot)$, where $\Theta(\cdot)$ is a differential function with $\Theta(T) = 0$.
        By It\^{o}'s formula, we have
        \begin{equation*}
        \begin{aligned}
            d\tilde{X} &= \dot{\Theta}\tilde{Y}dt + \Theta\big[(\mathcal{A}_2\Theta\tilde{Y} - \mathcal{B}_2\tilde{Y} + \rho)dt + \tilde{\mathcal{D}}dW_0 + \check{\mathcal{D}}^{(N)}dW^{(N)}\big] \\
            &= \big[\mathcal{A}_1\Theta\tilde{Y} - \mathcal{B}_1\tilde{Y} + \mathcal{C}\tilde{Z} + \varpi\big]dt + \tilde{Z}dW_0 + \check{Z}^{(N)}dW^{(N)}.
        \end{aligned}
        \end{equation*}
        Comparing the corresponding coefficients, we have
        \begin{equation}\label{Riccati of leader---}
            \dot{\Theta} + \Theta\mathcal{A}_2\Theta - \Theta\mathcal{B}_2 - \mathcal{A}_1\Theta + \mathcal{B}_1 = 0, \quad \Theta(T) = 0.
        \end{equation}
        Then we achieve
        \begin{equation*}
            d\tilde{Y} = \big[(\mathcal{A}_2\Theta - \mathcal{B}_2)\tilde{Y} + \rho\big]dt + \tilde{\mathcal{D}}dW_0 + \check{\mathcal{D}}^{(N)}dW^{(N)},
        \end{equation*}
        with $\tilde{Y}(0) = \big[H_0(\check{x}_0(0) - \bar{x}_0(0)), \xi^{(N)} - \bar{\xi}, 0\big]^\top$.
        This implies
        \begin{equation*}
            \tilde{Y}(t) = e^{(\mathcal{A}_2\Theta - \mathcal{B}_2)t}\tilde{Y}(0) + \int_{0}^{t} e^{(\mathcal{A}_2\Theta - \mathcal{B}_2)(t-s)}\big[\rho ds + \tilde{\mathcal{D}}dW_0(s) + \check{\mathcal{D}}^{(N)}dW^{(N)}(s)\big],
        \end{equation*}
        which gives
        \begin{equation}\label{estimate tilde Y}
            \sup_{0 \le t \le T} \mathbb{E} ||\tilde{Y}(t)||^2 = O(\frac{1}{N}),
        \end{equation}
        and since $\tilde{X}(\cdot) = \Theta(\cdot)\tilde{Y}(\cdot)$, then
        \begin{equation}\label{estimate tilde X}
            \sup_{0 \le t \le T} \mathbb{E} ||\tilde{X}(t)||^2 = O(\frac{1}{N}),
        \end{equation}
        which means
        \begin{equation}\label{estimates-1}
            \mathbb{E} \int_0^T ||\check{x}_0 - \bar{x}_0||^2dt = O(\frac{1}{N}), \quad \mathbb{E} \int_0^T ||\check{x}^{(N)} - \bar{x}||^2dt = O(\frac{1}{N}),
        \end{equation}
        and
        \begin{equation}\label{estimates-2}
        \begin{aligned}
        \mathbb{E} \int_0^T ||\check{y}_0 - \bar{y}_0||^2dt = O(\frac{1}{N}),\quad \mathbb{E} \int_0^T ||\check{y}^{(N)} - \bar{y}||^2dt = O(\frac{1}{N}), \quad \mathbb{E} \int_0^T ||\hat{\psi} - \bar{\psi}||^2dt = O(\frac{1}{N}).
        \end{aligned}
        \end{equation}
        Combined with (\ref{estimate hat x_bar x}), we have
        \begin{equation}\label{estimate hat x_check x}
            \mathbb{E} \int_0^T ||\hat{x}^{(N)} - \check{x}^{(N)}||^2dt = O(\frac{1}{N}),
        \end{equation}
        and $\bar{x}_0(t)= \hat{x}_0(t)$, for all $t\in[0,T]$. Then
        \begin{equation}\label{estimate hat x_check x 0}
            \mathbb{E} \int_0^T ||\hat{x}_0 - \check{x}_0||^2dt = O(\frac{1}{N}).
        \end{equation}

        We need two steps. First, by (\ref{estimate hat x_check x}), we get
        \begin{equation}\label{estimates-cost}
        \begin{aligned}
            &J_0^N(\check{u}_0(\cdot), \check{u}^N[\cdot;\check{u}_0(\cdot)]) \le J_0^N(u_0(\cdot), \check{u}^N[\cdot;u_0(\cdot)])\\
            &= \frac{1}{2} \mathbb{E} \left\{\int_0^T \left[||x_0 - \Gamma_0\check{x}^{(N)} - \eta_0||^2_{Q_0} + ||u_0||^2_{R_0}\right]dt + ||x_0(0)||^2_{H_0}\right\} \\
            &= \frac{1}{2} \mathbb{E} \left\{\int_0^T \left[||x_0 - \Gamma_0\hat{x}^{(N)} - \eta_0 + \Gamma_0\left(\hat{x}^{(N)} - \check{x}^{(N)}\right)||^2_{Q_0} + ||u_0||^2_{R_0}\right]dt + ||x_0(0)||^2_{H_0}\right\} \\
            &\le J_0^N(u_0(\cdot), \hat{u}^N[\cdot;u_0(\cdot)]) + O(\frac{1}{N}) + O(\frac{1}{\sqrt{N}}).
        \end{aligned}
        \end{equation}
        Next, from (\ref{estimates-2})-(\ref{estimate hat x_check x 0}), we have
       \begin{equation}\label{estimates-cost-}
        \begin{aligned}
            &J_0^N(\hat{u}_0(\cdot), \hat{u}^N[\cdot;\hat{u}_0(\cdot)]) \\
            &= \frac{1}{2} \mathbb{E} \left\{\int_0^T \left[||\hat{x}_0 - \Gamma_0\hat{x}^{(N)} - \eta_0||^2_{Q_0} + ||\hat{u}_0||^2_{R_0}\right]dt + ||\hat{x}_0(0)||^2_{H_0}\right\} \\
            &= \frac{1}{2} \mathbb{E} \bigg\{\int_0^T \Big[||\hat{x}_0 - \Gamma_0\hat{x}^{(N)} - \eta_0||^2_{Q_0} + ||R_0^{-1}\big[B_0\bar{y}_0 + (G - BR^{-1}L)\bar{y}  \\
            &\qquad\quad - \bar{\Pi}(G - BR^{-1}L)\bar{\psi}\big]||^2_{R_0}\Big]dt+ ||\hat{x}_0(0)||^2_{H_0}\bigg\}  \\
            &= \frac{1}{2} \mathbb{E} \bigg\{\int_0^T \Big[||\check{x}_0 - \Gamma_0\check{x}^{(N)} - \eta_0 + (\hat{x}_0 - \check{x}_0) + \Gamma_0\left(\check{x}^{(N)} - \hat{x}^{(N)}\right)||^2_{Q_0}  \\
            &\qquad\quad + ||R_0^{-1}\big[B_0(\check{y}_0 + (\bar{y}_0 - \check{y}_0))+ (G - BR^{-1}L)\left(\check{y}^{(N)} + (\bar{y} - \check{y}^{(N)})\right)  \\
            &\qquad\quad - \bar{\Pi}(G - BR^{-1}L)(\check{\psi} + (\bar{\psi} - \check{\psi}))\big]||^2_{R_0}\Big]dt + ||\check{x}_0(0) + (\hat{x}_0(0) - \check{x}_0(0))||^2_{H_0}\bigg\}  \\
            &\le J_0^N(\check{u}_0(\cdot), \check{u}^N[\cdot;\check{u}_0(\cdot)]) + O(\frac{1}{N}) + O(\frac{1}{\sqrt{N}}).
        \end{aligned}
        \end{equation}
        Hence, from (\ref{estimates-cost}) and (\ref{estimates-cost-}), we have
        $$J_0^N(\hat{u}_0(\cdot), \hat{u}^N[\cdot;\hat{u}_0(\cdot)]) \le J_0^N(u_0(\cdot), \hat{u}^N[\cdot;u_0(\cdot)]) + \epsilon_2.$$
        Thus, by Definition \ref{def2.1}, $(\hat{u}_0(\cdot), \hat{u}_1(\cdot),\cdots,\hat{u}_N(\cdot))$ is an $(\epsilon_1, \epsilon_2)$-Stackelberg equilibrium, where $\epsilon_1 = \epsilon_2 = O(\frac{1}{\sqrt{N}})$. The proof is complete.
    \end{proof}

    \section{Numerical simulation}

    This section provides numerical examples for Problem {\bf (P0)} to verify our results.
    We set $A_0=B_0=C_0=f_0=A=B=F=G=f=G=Q_0=R_0=\eta_0=Q=R=H=\eta=L=1$, $D=0.05$, $H_0=0$, $\Gamma=\Gamma_0=\Gamma_1=0.5$. The time interval is $[0,5]$.
    The initial states of followers are independently drawn from a uniform distribution on $[0,10]$, and the terminal condition of the leader is $\xi_0 \sim N(0,5)$.
    The curve of $\bar{P}(t)$, $\bar{K}(t)$ and $\bar{\Pi}(t)$, described by (\ref{Riccati-1 of followers--})-(\ref{Riccati-1+2 of followers--}), are shown in Figure \ref{PKPi_plot}.
    \begin{figure}[htbp]
        \centering\includegraphics[width=12cm]{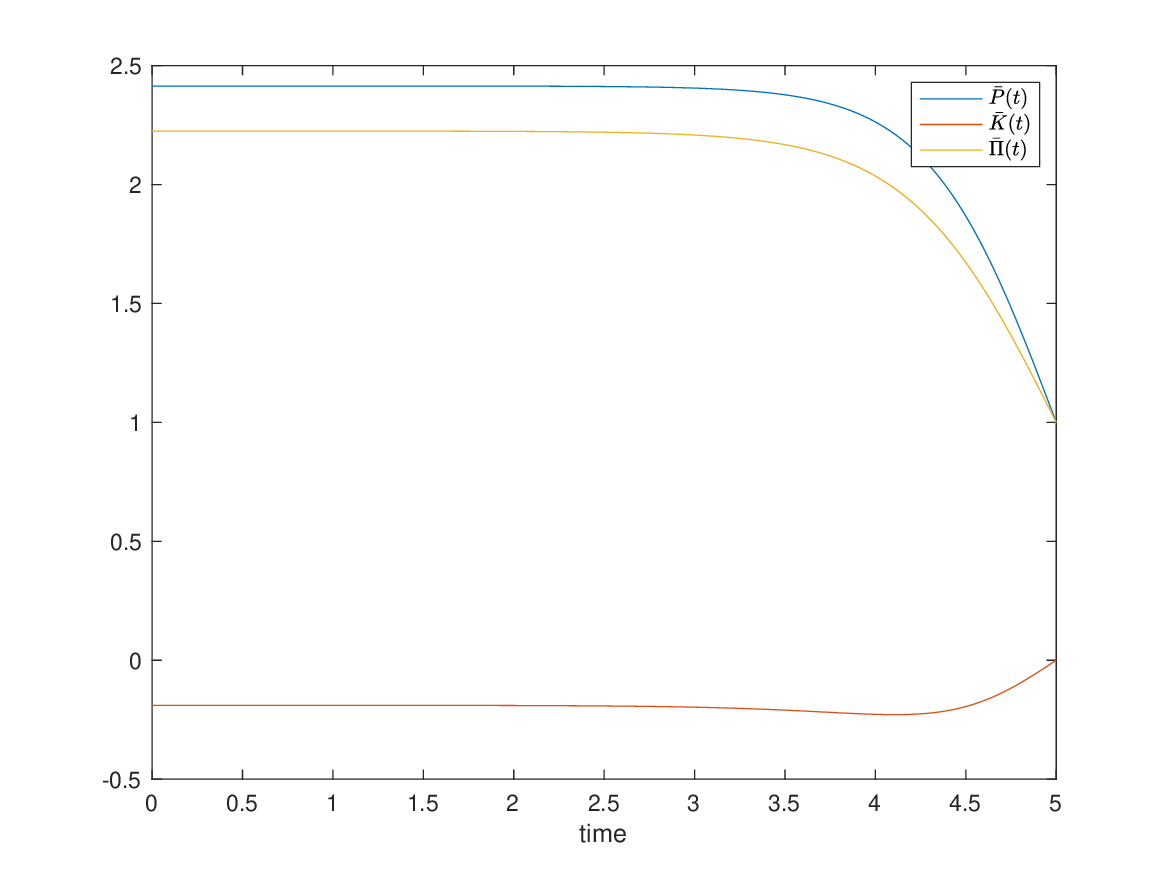}
        \caption{The curves of $\bar{P}(t)$, $\bar{K}(t)$ and $\bar{\Pi}(t)$}
        \label{PKPi_plot}
    \end{figure}
    
    The solution to the Riccati equation (\ref{Riccati of leader}), $\Phi(\cdot) \in \mathbb{R}^{3 \times 3}$, is depicted in Figure \ref{Phi_plot}. Note that
    $\Phi(\cdot)$ is neither symmetric nor positive semi-definite.
    \begin{figure}[htbp]
        \centering\includegraphics[width=12cm]{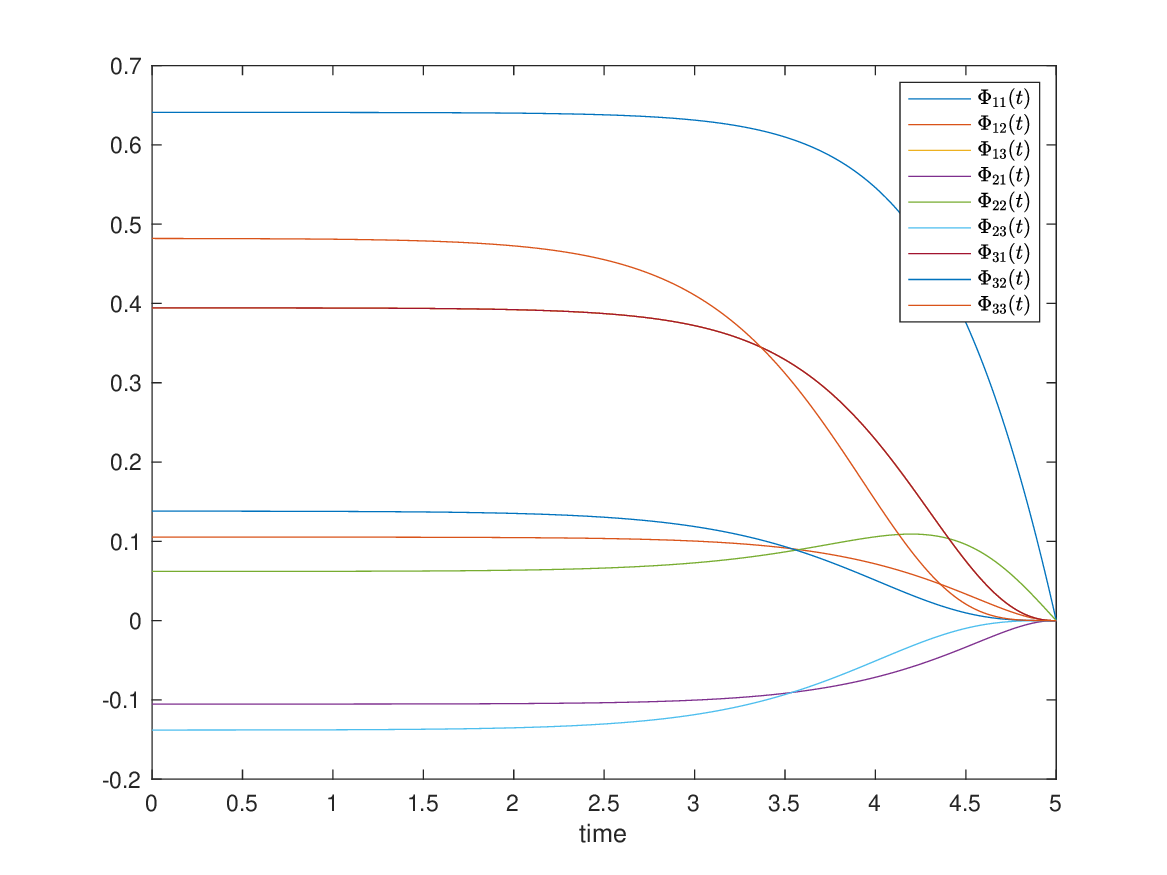}
        \caption{The curve of $\Phi(t)$}
        \label{Phi_plot}
    \end{figure}
    
    We denote $\epsilon(N) = \bigl(\mathbb{E}\int_{0}^{T}||\hat{x}^{(N)} - \bar{x}||^2 dt\bigr)^{\frac{1}{2}}$ to depict the performance of the decentralized strategies.
    The graph of $\epsilon(N)$ with respect to $N$ is shown in Figure \ref{epsilon_plot}, which confirms the consistency of the mean field approximation.
    \begin{figure}[htbp]
        \centering\includegraphics[width=12cm]{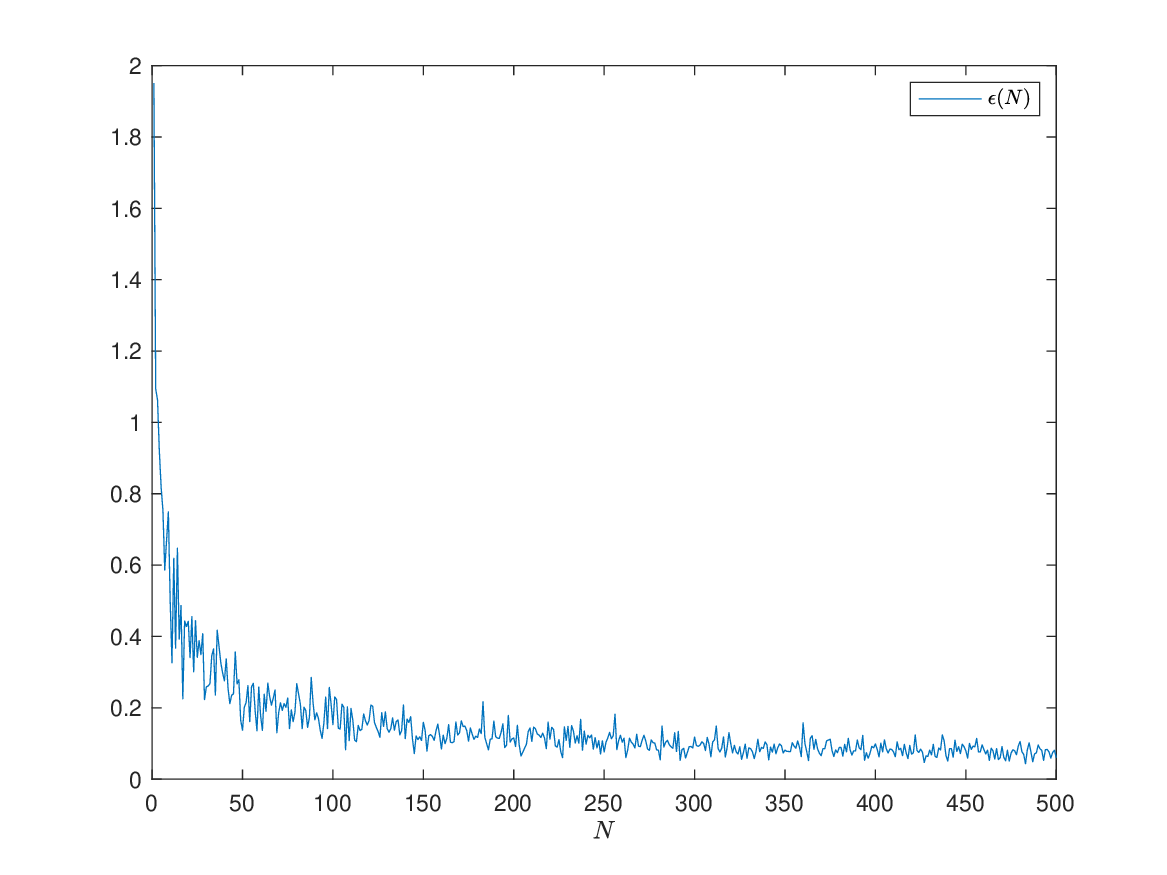}
        \caption{$\epsilon(N)$ with respect to $N$}
        \label{epsilon_plot}
    \end{figure}

    \section{Conclusions}

    In this paper, we have explored a backward-forward mean field LQ Stackelberg stochastic differential game featuring a backward leader and numerous forward followers.
    Initially, we address a mean field game problem concerning $N$ followers. Through the decoupling of the high-dimensional Hamiltonian system using mean field approximations, we formulate a set of open-loop decentralized strategies for all players.
    Subsequently, these strategies are demonstrated to constitute an $(\epsilon_1, \epsilon_2)$-Stackelberg equilibrium.

    Other topics including the case when the diffusion coefficients are dependent of state and control variables (\cite{Xu-Zhang-20}, \cite{Si-Wu-21}), and with common noise (\cite{Bensoussan-Feng-Huang-21}), are our future research interests.

\end{document}